\newcommand{\pp}{\mathsf p}
\newcommand{\cc}{\mathsf c}
\newcommand{\II}{\mathsf I}
\newcommand{\NN}{\mathbb N}
\renewcommand{\ss}{\mathsf s}
\newcommand{\cb}{BF}
\newcommand{\cpp}{PV}
\newcommand{\cefp}{DV}
\newcommand{\cek}{Cm}
\newcommand{\ceu}{Cs}
\newcommand{\cec}{Cn}
\newcommand{\csmn}{SMN}
\newcommand{\cfp}{DV}
\newcommand{\sx}{{\overline x}}
\newcommand{\sy}{{\overline y}}
\newcommand{\st}{{\overline t}}
\newcommand{\sz}{{\overline z}}
\newcommand{\su}{{\overline u}}
\newcommand{\sv}{{\overline v}}
\newcommand{\sr}{{\overline r}}
\newcommand{\sk}{{\overline k}}
\newcommand{\sm}{{\overline m}}
\newcommand{\zz}{\mathsf z}
\newcommand{\xx}{\mathsf x}
\newcommand{\kk}{\mathsf k}
\newcommand{\rvf}{\mathrel{\mathbf{r}^V_f}}
\newcommand{\rvfx}[1]{\mathrel{\mathbf{r}^V_{f,\,#1}}}
\newcommand{\BQC}{\ensuremath{\mathsf{BQC}}}
\begin{document}

\markboth{Aleksandr Yu. Konovalov}
{Generalized Realizability and Basic Logic}

%%%%%%%%%%%%%%%%%%%%% Publisher's Area please ignore %%%%%%%%%%%%%%%
\catchline{}{}{}{}{}
%%%%%%%%%%%%%%%%%%%%%%%%%%%%%%%%%%%%%%%%%%%%%%%%%%%%%%%%%%%%%%%%%%%%

\title{Generalized Realizability and Basic Logic}
\author{Aleksandr Yu. Konovalov}
\address{%Laboratory of Mathematical Problem of Artificial Intelligenc, \\
%Department of Mechanics and Mathematics, \\
Faculty of Mechanics and Mathematics, \\
Lomonosov Moscow State University, \\
GSP-1, Leninskie Gory, Moscow, 119991, Russian Federation, \\
alexandr.konoval@gmail.com}

\maketitle

%\begin{history}
%\received{(Day Month Year)}
%\revised{(Day Month Year)}
%\end{history}

\begin{abstract}
Let $V$ be a set of number-theoretical functions.
We define a notion of absolute $V$-realizability for predicate formulas and sequents in such a way
that the indices of functions in $V$ are used for interpreting the implication and the universal quantifier.
In this paper we prove that
Basic Logic is sound with respect to the semantics of absolute $V$-realizability
if $V$ satisfies some natural conditions.
\end{abstract}

\keywords{constructive semantics, realizability, absolute realizability, basic logic}

\ccode{Mathematics Subject Classification 2010: 03F50}

\section{Introduction}

The notion of recursive realizability was introduced by S.~C.~Kleene \cite{klini1945}.
%It is a specification of informal intuitionistic semantics based on partial recursive functions.
It specifies the informal intuitionistic semantics by partial recursive functions \cite{plisko_review}.
A natural generalization of recursive realizability is the $V$-realizability for
some set of functions $V$, where functions from the set~$V$ are used instead of partial recursive functions.
Recently, special cases of $V$-realizability were considered:
primitive recursive realizability \cite{dam1994,sal2001_1},
minimal realizability \cite{dam1995},
%general recursive realizability,
arithmetical realizability \cite{kon_ar_bl,kon_ar_pr},
%$L$-realizability \cite{kon_Lr_IPC},\cite{kon_Lr_IPC_IS},
hyperarithmetical realizability \cite{kon_plisko_hr}.
%It is of interest to consider how logics of V-realizability dependence on properties of V.
Intuitionistic Logic is sound with respect to the semantics of recursive realizability. %\cite{klini_book}
But in general this is not the case for the $V$-realizability
\cite{kon_ar_bl,kon_plisko_hr,viter_dis,pak_dis}.
Basic Logic was introduced in \cite{visser,ruitenburg}.
It is weaker than Intuitionistic Logic.
For example, the formula $(\top \to P) \to P$ is not derivable in Basic Logic.
The aim of this paper is to prove that Basic Logic is sound with respect to the semantics
of $V$-realizability if $V$ satisfies some natural conditions.

\section{Definitions}
\subsection{$V$-functions}
We begin with some notation.
Denote by $\NN$ the set of all natural numbers $0, 1, 2, \ldots$
Let $\cc$ be a bijection of $\NN^2$ to $\NN$. % можно взять здесь and инъекцию, далее без изменений
Denote by $\pp_1, \pp_2$ the functions of $\NN$ to $\NN$
such that, for all $a, b \in \NN$, $\pp_1(\cc(a, b)) = a$ and $\pp_2(\cc(a, b)) = b$.
We omit the brackets in expressions of the form $\pp_1(t'),\ \pp_2(t'')$ and write $\pp_1 t',\ \pp_2 t''$.
Suppose $n \ge 1$ and $1 \le i \le n$, denote by $I^i_n$ the function of $\NN^n$ to $\NN$ such that $I^i_n(a_1, \ldots, a_n) = a_i$
for all $a_1, \ldots, a_n \in \NN$.
%By $Z$ denote a 0-ary function such that $Z \simeq 0$.

We consider an arbitrary (countable) set $V$ of partial functions with arguments and values from $\NN$.
We say that $\varphi$ is a $V$-function if $\varphi\in V$.
For every $n \ge 0$, denote by $V_n$ the set of all $n$-ary $V$-functions.
Clearly, $V = \bigcup^\infty_{n = 0} V_n$.
For every $n \ge 0$, let us fix some numbering of the set $V_n$.
This means that we fix some set of indices $\II_n \subseteq \NN$ and a mapping $e \mapsto \varphi^{V,\,n}_e$ such that
$\varphi^{V,\,n}_e$ is an $n$-ary $V$-function whenever $e \in \II_n$ and
every $n$-ary $V$-function is $\varphi^{V,\,n}_e$ for some $e \in \II_n$.
We often write $\varphi^V_e$ instead of $\varphi^{V,\,n}_e$ if there is no confusion.

%Denote by $\FF$ the set of all partial functions with arguments and values from $\NN$.
Let $Var = \{x_1, x_2, \ldots\}$ be a countable set of variables.
We say that an expression $t$ is a \textit{$V$-term} if
$t$ is a natural number or
$t \in Var$ or
%$t$ is an $0$-ary
$t$ has the form $\varphi(t_1, \ldots, t_n)$, %for some $n \ge 0$
where $\varphi \in V_n$ and $t_1, \ldots, t_n$ are $V$-terms, for some $n \ge 0$.
%or $t$ is $\varphi$,
%where $\varphi \in V_0$.
Any $V$-term without variables is called \textit{closed}.
Suppose $e$ is a natural number and $t$ is a closed $V$-term,
then the relation ``$e$ is the value of $t$'' is defined inductively by the length of $t$:
$e$ is the value of $t$ if $t$ is the natural number $e$;
$e$ is the value of $\varphi(t_1, \ldots, t_n)$ if there are natural numbers $e_1, \dots, e_n$ such that
$e_1, \dots, e_n$ are the values of $t_1, \ldots, t_n$,
$\varphi(e_1, \ldots, e_n)$ is defined, and $e = \varphi(e_1, \ldots, e_n)$.
%\begin{itemize}
%  \item $e$ is the value of $t$ if $t$ is the natural number $e$;
%  \item $e$ is the value of $\varphi(t_1, \ldots, t_n)$ if there are natural numbers $e_1, \dots, e_n$ such that
%  $e_1, \dots, e_n$ are the values of $t_1, \ldots, t_n$,
%  $\varphi(e_1, \ldots, e_n)$ is defined, and $e = \varphi(e_1, \ldots, e_n)$.
%  %\item $e$ is the value of $\varphi$, where $\varphi \in V_0$,
%  %if the value $\varphi$ is defined and the value of $\varphi$ is $e$.
%\end{itemize}
We say that the value of a closed $V$-term $t$ is defined
if there is a natural number $e$ such that $e$ is the value of $t$.
It can be easily checked that if the value of closed $V$-term $t$ is defined,
then there exists a unique natural number $e$ such that $e$ is the value of $t$.
In this case we denote by $\overline{t}$ the value of $t$.
Suppose $t_1, t_2$ are closed $V$-terms,
we write $t_1 \simeq t_2$
if either (i) the values of $t_1$ and $t_2$ are not defined,
or (ii) the values of $t_1$ and $t_2$ are defined
and $\overline{t}_1 = \overline{t}_2$.
Let $k_1, \ldots, k_n$ be natural numbers, $x_1, \ldots, x_n$ distinct variables, and $t$ an $V$-term,
denote by $[k_1, \ldots, k_n/x_1, \ldots, x_n]\,t$
the result of substituting $k_1, \ldots, k_n$ for all occurrences of $x_1, \ldots, x_n$ in $t$.
Suppose $t_1,\ t_2$ are $V$-terms
and all variables in $t_1$ and $t_2$ are in a list of distinct variables $x_1, \ldots, x_n$,
we write $t_1 \simeq t_2$
if for all natural numbers $k_1, \ldots, k_n$ we have
$[k_1, \ldots, k_n/x_1, \ldots, x_n]\,t_1 \simeq [k_1, \ldots, k_n/x_1, \ldots, x_n]\,t_2.$

We assume that the following conditions hold:
\begin{itemize}
  \item[(\cb)]
  $I^i_n$, $\cc$, $\pp_1$, $\pp_2$ are $V$-functions for all $n \ge 1$,\ $1 \le i \le n$;

  \item[(\cek)]
  the composition of $V$-functions is a $V$-function and
  an index of it can be obtained by some $V$-function:
  for all natural numbers $n, m_1, \ldots, m_n$ there is an $(n+1)$-ary $V$-function $s$ such that
  $s(e, e_1, \ldots, e_n) \in \II_m$ and
\begin{equation*}
  \varphi^V_{s(e, e_1, \ldots, e_n)}(x_1, \ldots, x_m) \simeq
  \varphi^V_e(\varphi^V_{e_1}(x_1, \ldots, x_{m_1}), \ldots, \varphi^V_{e_n}(x_1, \ldots, x_{m_n}))
\end{equation*}
for all~$e \in \II_n, e_1 \in \II_{m_1}, \ldots, e_n \in \II_{m_n}$,
where $m = \max_{1 \le i \le n}{m_i}$;

\item[(\cec)]
  every constant function is a $V$-function and
  an index of it can be obtained by some $V$-function:
  there exists a $V$-function $s$ such that, for all natural numbers $k$,
  we have $s(k) \in \II_0$ and $\varphi^{V,\ 0}_{s(k)} \simeq k$.

\item[(\ceu)]
an index of a ``conditional function'' can be obtained by some $V$-function:
for every natural number $n$ there is a $V$-function $s$
such that, for all natural numbers $d$ and $e_1, e_2 \in \II_{n+1}$, we have $s(e_1, e_2) \in \II_{n+1}$,
%\begin{equation*}
%  \varphi^V_{s(e_1, e_2)}(x_1, \ldots, x_n, d) \simeq
%  \begin{cases}
%    \varphi^V_{e_1}(x_1, \ldots, x_n, d), & \mbox{if } \pp_1 d = 0; \\
%    \varphi^V_{e_2}(x_1, \ldots, x_n, d), & \mbox{otherwise};
%  \end{cases}
%\end{equation*}
\begin{align*}
\varphi^V_{s(e_1, e_2)}(x_1, \ldots, x_n, d) \simeq \varphi^V_{e_1}(x_1, \ldots, x_n, d)\ \mbox{ if } \pp_1 d = 0, \\
 \varphi^V_{s(e_1, e_2)}(x_1, \ldots, x_n, d) \simeq \varphi^V_{e_2}(x_1, \ldots, x_n, d)\ \mbox{ if } \pp_1 d \not= 0;
\end{align*}
\end{itemize}

% Примеры множества V c нумерацией: частично-рекурсивные функции, примитивно-рекурсивные функции,
% общерекурсивные функции,
For example,
if $\cc$, $\pp_1$, $\pp_2$ are recursive (see \S 5.3 in \cite{rodgers}), then
the following sets of functions with some numbering satisfy the conditions
(\cb), (\cek), (\cec), (\ceu):
\begin{itemize}
  \item the set of all partial recursive functions;
  %\item the set of all primitive recursive functions;
  %\item the set of all general recursive functions;
  \item the set of all arithmetical functions (see \cite{kon_ar_bl,kon_ar_pr});
  \item the set of all hyperarithmetical functions (see \cite{kon_plisko_hr});
  \item the set of all $L$-defined functions, where $L$ is an extension of the language of arithmetic
        (see \cite{kon_Lr_IPC,kon_Lr_IPC_IS}).
\end{itemize}

Now we show that the following conditions hold:
\begin{itemize}
  \item[(\cpp)]
  Any permutation of variables is available for the $V$-functions:
  if $p$ is a permutation of the set $\{1, \ldots, n\}$,
  then there is a $V$-function $s$ such that,
  for all $e \in \II_n$, $s(e) \in \II_n$ and
  $
  \varphi^V_{s(e)}(x_1, \ldots, x_n) \simeq \varphi^V_e(x_{p(1)}, \ldots, x_{p(n)});
  $

  %if $\psi$ is an $n$-ary $V$-function, $s$ is a permutation of the set $\{1, \ldots, n\}$,
%  and
%  $\varphi(x_1, \ldots, x_n) \simeq \psi(x_{s(1)}, \ldots, x_{s(n)})$,
%  then $\varphi$ is a $V$-function;

  \item[(\cefp)] Adding of a dummy variable is available for the $V$-functions:
  for all natural numbers $n$ there exists a $V$-function $s$ such that,
    for all $e \in \II_n$, $s(e) \in \II_{n+1}$ and
    $
    \varphi^V_{s(e)}(x_1, \ldots, x_n, x_{n+1}) \simeq \varphi^V_e(x_1, \ldots, x_n);
    $
  \item[(\csmn)] An analog of the ($s-m-n$)-theorem (Theorem V \S 1.8 in \cite{rodgers}) holds:
  for all natural numbers $m,\ n$ there exists a $V$-function $s$ such that,
for all natural numbers $k_1, \ldots, k_m$ and $e \in \II_{m+n}$, we have $s(e, k_1, \ldots, k_m) \in \II_n$ and
\begin{equation*}
  \varphi^V_{s(e, k_1, \ldots, k_m)}(x_1, \ldots, x_n) \simeq \varphi^V_e(x_1, \ldots, x_n, k_1, \ldots, k_m).
\end{equation*}
\end{itemize}

\begin{lemma}
    %(\cb), (\cek) imply (\cpp).
    (\cb), (\cek), (\cec) imply (\cpp).
\end{lemma}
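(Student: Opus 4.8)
The plan is to realize $\varphi^V_{s(e)}$ as the composition of $\varphi^V_e$ with the projection functions $I^{p(1)}_n,\dots,I^{p(n)}_n$. The composition itself is furnished by (\cek), the projections are $V$-functions by (\cb), and the remaining issue — that (\cek) delivers the index of a composition as a function of \emph{all} the indices involved, whereas here only the index $e$ of the outer function should vary — is dealt with by absorbing the (fixed) indices of the projections into $s$ via a second application of (\cek) together with (\cec).

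In detail I would argue as follows. The case $n=0$ is trivial (take $s=I^1_1$), so assume $n\ge 1$. By (\cb) each $I^{p(i)}_n$ is an $n$-ary $V$-function, so the fixed numbering of $V_n$ provides numbers $c_1,\dots,c_n\in\II_n$ with $\varphi^{V,n}_{c_i}=I^{p(i)}_n$. Applying (\cek) with outer arity $n$ and all inner arities equal to $n$ produces an $(n+1)$-ary $V$-function $g$ such that $g(e,e_1,\dots,e_n)\in\II_n$ and
\[
\varphi^V_{g(e,e_1,\dots,e_n)}(x_1,\dots,x_n)\simeq\varphi^V_e(\varphi^V_{e_1}(x_1,\dots,x_n),\dots,\varphi^V_{e_n}(x_1,\dots,x_n))
\]
for all $e,e_1,\dots,e_n\in\II_n$. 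Since $\varphi^V_{c_i}(x_1,\dots,x_n)\simeq x_{p(i)}$, substituting $e_i:=c_i$ shows that $g(e,c_1,\dots,c_n)\in\II_n$ and $\varphi^V_{g(e,c_1,\dots,c_n)}(x_1,\dots,x_n)\simeq\varphi^V_e(x_{p(1)},\dots,x_{p(n)})$ for every $e\in\II_n$. Hence it is enough to exhibit a $V$-function $s$ with $s(e)=g(e,c_1,\dots,c_n)$ for all $e\in\II_n$.

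To produce such an $s$, fix an index $d\in\II_{n+1}$ of $g$, an index $h_0\in\II_1$ of $I^1_1$ (available by (\cb)), and, using (\cec), indices $h_1,\dots,h_n\in\II_0$ of the $0$-ary constant functions with values $c_1,\dots,c_n$. Now apply (\cek) once more, with outer arity $n+1$, one inner function of arity $1$ (namely $I^1_1$) and $n$ inner functions of arity $0$ (the constants), so that $m=\max(1,0,\dots,0)=1$ in the notation of (\cek). Evaluating the $V$-function supplied by (\cek) at the fixed arguments $d,h_0,h_1,\dots,h_n$ then yields a single number $N\in\II_1$ with $\varphi^{V,1}_N(x_1)\simeq\varphi^V_d(\varphi^V_{h_0}(x_1),\varphi^V_{h_1}(),\dots,\varphi^V_{h_n}())\simeq g(x_1,c_1,\dots,c_n)$. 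Taking $s:=\varphi^{V,1}_N$ completes the argument: $s$ is a $V$-function, and by the previous paragraph $s(e)\in\II_n$ and $\varphi^V_{s(e)}(x_1,\dots,x_n)\simeq\varphi^V_e(x_{p(1)},\dots,x_{p(n)})$ for every $e\in\II_n$, which is (\cpp).

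The step I expect to require the most care is this last one: one must keep straight which quantities are fixed and which range over $\II_n$, and one must use that (\cec) provides $0$-ary constant functions (not unary ones), which is exactly why the surviving argument has to be carried through by $I^1_1$ and why the arities in the second application of (\cek) are as indicated. Everything else is a routine verification from the definitions. If preferred, the content of the third paragraph can be isolated as an auxiliary lemma — ``replacing some arguments of a $V$-function by numerical constants again gives a $V$-function'' — which will in any case be wanted, in a uniform form, on the way to (\csmn).
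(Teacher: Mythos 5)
Your proposal is correct and follows essentially the same route as the paper's proof: express the permutation via the projections $I^{p(i)}_n$ (from (\cb)), compose with $\varphi^V_e$ via (\cek) to get an index depending on $e$ and the fixed projection indices, and then absorb those fixed indices using (\cec)'s $0$-ary constants together with $I^1_1$ and a second application of (\cek). The only differences are cosmetic — you handle $n=0$ separately and spell out the arity bookkeeping $m=\max(1,0,\dots,0)=1$, which the paper leaves implicit.
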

\begin{proof}
Let $p$ be a permutation of the set $\{1, \ldots, n\}$.
Since $x_{p(j)} \simeq I^{p(j)}_n(x_1, \ldots, x_n)$ for all $j = 1, \ldots, n$,
we see that, for all $e \in \II_n$,
\begin{equation*}
\varphi^V_e(x_{p(1)}, \ldots, x_{p(n)})
\simeq
  \varphi^V_e(I^{p(1)}_n(x_1, \ldots, x_n), \ldots, I^{p(n)}_n(x_1, \ldots, x_n)).
\end{equation*}
It follows from (\cb) that there are natural numbers $i_1, \ldots, i_n$ such that
$i_j$ is an index of $I^{p(j)}_n$ for all $j = 1, \ldots, n$.
Using (\cek), we get that there exists a $V$-function $s'$ such that, for all $e \in \II_n$,
\begin{equation*}
\varphi^V_{s'(e, i_1, \ldots, i_n)}(x_1, \ldots, x_n)
\simeq
  \varphi^V_e(I^{p(1)}_n(x_1, \ldots, x_n), \ldots, I^{p(n)}_n(x_1, \ldots, x_n)).
\end{equation*}
Thus for all $e \in \II_n$ we have
\begin{equation}\label{l1_eq_1}
\varphi^V_{s'(e, i_1, \ldots, i_n)}(x_1, \ldots, x_n)
\simeq
\varphi^V_e(x_{p(1)}, \ldots, x_{p(n)}).
\end{equation}
By (\cec),  there are natural numbers $l_1, \ldots, l_n$ such that
$\varphi^V_{l_j} \simeq i_j$ for all $j = 1, \ldots, n$.
Let $i$ denote an index of $I^1_1$.
It is obvious that, for all $e \in \II_n$,
\begin{equation*}
  s'(e, i_1, \ldots, i_n) \simeq s'(\varphi^V_i(e), \varphi^V_{l_1}, \ldots, \varphi^V_{l_n}).
\end{equation*}
It follows from (\cek) that there exists a $V$-function $s$ such that
\begin{equation*}
  s(x) \simeq s'(\varphi^V_i(x), \varphi^V_{l_1}, \ldots, \varphi^V_{l_n}).
\end{equation*}
Thus for all natural numbers $e$ we have
\begin{equation}\label{l1_eq_2}
  s(e) \simeq s'(e, i_1, \ldots, i_n).
\end{equation}
From \eqref{l1_eq_1}, \eqref{l1_eq_2} it follows that, for all $e \in \II_n$,
$$\varphi^V_{s(e)}(x_1, \ldots, x_n) \simeq \varphi^V_e(x_{p(1)}, \ldots, x_{p(n)}).$$

\end{proof}

\begin{lemma}
    (\cb), (\cek), (\cec) imply (\cefp).
\end{lemma}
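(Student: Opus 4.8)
The plan is to follow the same pattern as the proof of the previous lemma (the one deriving (\cpp)), replacing the permutation trick by a projection trick. The starting observation is that $x_j \simeq I^j_{n+1}(x_1, \ldots, x_{n+1})$ for each $j = 1, \ldots, n$, so that for every $e \in \II_n$ we have
\begin{equation*}
\varphi^V_e(x_1, \ldots, x_n) \simeq \varphi^V_e(I^1_{n+1}(x_1, \ldots, x_{n+1}), \ldots, I^n_{n+1}(x_1, \ldots, x_{n+1})).
\end{equation*}
This exhibits the $(n+1)$-ary function we are after as the composition of the $n$-ary function $\varphi^V_e$ with $n$ copies of $(n+1)$-ary projections, which is exactly the shape of expression that (\cek) is designed to handle.

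Next I would invoke (\cb) to fix natural numbers $i_1, \ldots, i_n$ with $i_j$ an index of $I^j_{n+1}$, and then apply (\cek) to the composition above (the outer arity being $n$ and all $n$ inner arities being $n+1$, so that the resulting arity is $m = n+1$) to obtain a $V$-function $s'$ such that $s'(e, i_1, \ldots, i_n) \in \II_{n+1}$ and
\begin{equation*}
\varphi^V_{s'(e, i_1, \ldots, i_n)}(x_1, \ldots, x_{n+1}) \simeq \varphi^V_e(x_1, \ldots, x_n)
\end{equation*}
for all $e \in \II_n$.

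Finally, to absorb the fixed parameters $i_1, \ldots, i_n$ and obtain a genuinely unary $s$, I would use (\cec) to pick natural numbers $l_1, \ldots, l_n$ with $\varphi^{V,\,0}_{l_j} \simeq i_j$, let $i$ be an index of $I^1_1$, observe that $s'(e, i_1, \ldots, i_n) \simeq s'(\varphi^V_i(e), \varphi^V_{l_1}, \ldots, \varphi^V_{l_n})$ for all natural numbers $e$, and apply (\cek) once more (now the inner arities are $1, 0, \ldots, 0$, so the resulting arity is $1$) to get a $V$-function $s$ with $s(x) \simeq s'(\varphi^V_i(x), \varphi^V_{l_1}, \ldots, \varphi^V_{l_n})$, hence $s(e) \simeq s'(e, i_1, \ldots, i_n)$ for every natural number $e$. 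Combining the displays yields $\varphi^V_{s(e)}(x_1, \ldots, x_{n+1}) \simeq \varphi^V_e(x_1, \ldots, x_n)$ for all $e \in \II_n$, which is precisely (\cefp).

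As for difficulties, there is essentially no mathematical obstacle: the argument is a routine adaptation of the preceding lemma, differing only in that one composes with projections that discard the last argument rather than permuting arguments. The only points that need a little care are the bookkeeping ones — checking at each application of (\cek) that the arity $m$ it produces is the intended one ($n+1$ in the first use, $1$ in the second), and making sure the indices $i_j$ are taken for the $(n+1)$-ary projections $I^j_{n+1}$ rather than the $n$-ary ones. I expect the write-up to be slightly shorter than that of the previous lemma.
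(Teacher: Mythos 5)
Your proposal is correct and matches the paper's own proof essentially step for step: the same composition with the $(n+1)$-ary projections $I^j_{n+1}$, the same application of (\cek) to obtain $s'$, and the same use of (\cec) together with an index of $I^1_1$ and a second application of (\cek) to absorb the parameters $i_1,\ldots,i_n$ into a unary $V$-function $s$. The arity bookkeeping you flag is handled exactly as you describe.
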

\begin{proof}
By (\cb),  there are natural numbers $i_1, \ldots, i_n$ such that
%$i_j \in \II_{n+1}$ and $\varphi^V_{i_j}(\sx) \simeq I^j_{n+1}(\sx)$.
$i_j$ is an index of $I^j_{n+1}$ for all $j = 1, \ldots, n$.
It is obvious that, for all $e \in \II_n$,
\begin{equation*}
  \varphi^V_e(x_1, \ldots, x_n)
  \simeq \varphi^V_e(I^1_{n+1}(x_1, \ldots, x_n, x_{n+1}), \ldots, I^n_{n+1}(x_1, \ldots, x_n, x_{n+1})).
\end{equation*}
It follows from (\cek) that there exists a $V$-function $s'$ such that, for all $e \in \II_n$,
$s'(e, i_1, \ldots, i_n) \in \II_{n+1}$ and
\begin{equation*}
  \varphi^V_{s'(e, i_1, \ldots, i_n)}(x_1, \ldots, x_n, x_{n+1})
  \simeq \varphi^V_e(I^1_{n+1}(x_1, \ldots, x_n, x_{n+1}), \ldots, I^n_{n+1}(x_1, \ldots, x_n, x_{n+1})).
\end{equation*}
Thus for all $e \in \II_n$ we have
\begin{equation}\label{l2_eq_1}
  \varphi^V_{s'(e, i_1, \ldots, i_n)}(x_1, \ldots, x_n, x_{n+1})
  \simeq \varphi^V_e(x_1, \ldots, x_n).
\end{equation}
By (\cec),  there are natural numbers $l_1, \ldots, l_n$ such that
$\varphi^V_{l_j} \simeq i_j$ for all $j = 1, \ldots, n$.
Let $i$ denote an index of $I^1_1$.
It is obvious that, for all $e \in \II_n$,
\begin{equation*}
  s'(e, i_1, \ldots, i_n) \simeq s'(\varphi^V_i(e), \varphi^V_{l_1}, \ldots, \varphi^V_{l_n}).
\end{equation*}
It follows from (\cek) that there exists a $V$-function $s$ such that
\begin{equation*}
  s(x) \simeq s'(\varphi^V_i(x), \varphi^V_{l_1}, \ldots, \varphi^V_{l_n}).
\end{equation*}
Thus for all natural numbers $e$ we have
\begin{equation}\label{l2_eq_2}
  s(e) \simeq s'(e, i_1, \ldots, i_n).
\end{equation}
From \eqref{l2_eq_1}, \eqref{l2_eq_2} it follows that
$
  \varphi^V_{s(e)}(x_1, \ldots, x_n, x_{n+1})
  \simeq \varphi^V_e(x_1, \ldots, x_n)
$
for all $e \in \II_n$.
%Hence (\cfp).
\end{proof}

\begin{lemma}
    (\cb), (\cek), (\cec) imply (\csmn).
\end{lemma}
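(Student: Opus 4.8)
The plan is to imitate the two-stage argument used in the proofs of the two preceding lemmas. Fix natural numbers $m$ and $n$, and abbreviate $k_1, \ldots, k_m$ by $\sk$. By (\cb) choose natural numbers $i_1, \ldots, i_n$ with $i_j$ an index of $I^j_n$, and by (\cec) choose a $V$-function $c$ such that $c(k) \in \II_0$ and $\varphi^{V,\,0}_{c(k)} \simeq k$ for every $k$. The starting point is the identity
\begin{equation*}
\varphi^V_e(x_1, \ldots, x_n, k_1, \ldots, k_m) \simeq
\varphi^V_e\bigl(I^1_n(x_1,\ldots,x_n), \ldots, I^n_n(x_1,\ldots,x_n), \varphi^{V,\,0}_{c(k_1)}(), \ldots, \varphi^{V,\,0}_{c(k_m)}()\bigr),
\end{equation*}
valid for every $e \in \II_{m+n}$ and all natural $k_1, \ldots, k_m$; it presents $\varphi^V_e(x_1, \ldots, x_n, k_1, \ldots, k_m)$ as the composition of the $(m+n)$-ary function $\varphi^V_e$ with $n$ projections of arity $n$ and $m$ constants of arity $0$, whose arities have maximum $\max(n,0) = n$ — exactly the arity demanded in (\csmn).

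Next I would apply (\cek) to this composition to obtain an $(n+m+1)$-ary $V$-function $s'$ which, on valid indices, returns an element of $\II_n$ and satisfies
\begin{equation*}
\varphi^V_{s'(e, e_1, \ldots, e_{n+m})}(x_1, \ldots, x_n) \simeq
\varphi^V_e\bigl(\varphi^V_{e_1}(x_1,\ldots,x_n), \ldots, \varphi^V_{e_n}(x_1,\ldots,x_n), \varphi^V_{e_{n+1}}(), \ldots, \varphi^V_{e_{n+m}}()\bigr)
\end{equation*}
for $e \in \II_{m+n}$, $e_1, \ldots, e_n \in \II_n$, $e_{n+1}, \ldots, e_{n+m} \in \II_0$. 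Substituting $e_j = i_j$ ($1 \le j \le n$) and $e_{n+j} = c(k_j)$ ($1 \le j \le m$) and using the identity above, I would get that $s'(e, i_1, \ldots, i_n, c(k_1), \ldots, c(k_m)) \in \II_n$ and $\varphi^V_{s'(e, i_1, \ldots, i_n, c(k_1), \ldots, c(k_m))}(x_1, \ldots, x_n) \simeq \varphi^V_e(x_1, \ldots, x_n, k_1, \ldots, k_m)$ for every $e \in \II_{m+n}$ and all natural $k_1, \ldots, k_m$.

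It then remains to check that the map $(e, \sk) \mapsto s'(e, i_1, \ldots, i_n, c(k_1), \ldots, c(k_m))$ is a $V$-function — this is the one spot needing attention, and it is handled exactly as the corresponding step in the preceding lemmas. Using (\cec) choose $l_1, \ldots, l_n \in \II_0$ with $\varphi^{V,\,0}_{l_j} \simeq i_j$, and using (\cb) together with (\cek) note that each $c \circ I^{j+1}_{m+1}$ ($1 \le j \le m$) is an $(m+1)$-ary $V$-function. Then for all natural $e, k_1, \ldots, k_m$ the number $s'(e, i_1, \ldots, i_n, c(k_1), \ldots, c(k_m))$ is the value at $(e, \sk)$ of the composition of $s'$ with $I^1_{m+1}$, the $0$-ary constants $\varphi^{V,\,0}_{l_1}, \ldots, \varphi^{V,\,0}_{l_n}$, and the functions $c \circ I^{j+1}_{m+1}$; the arities of these inner functions have maximum $m+1$, so by (\cek) this composition is an $(m+1)$-ary $V$-function $s$ with $s(e, k_1, \ldots, k_m) \simeq s'(e, i_1, \ldots, i_n, c(k_1), \ldots, c(k_m))$ for all natural $e, k_1, \ldots, k_m$. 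Combining this with the previous paragraph gives $s(e, k_1, \ldots, k_m) \in \II_n$ and $\varphi^V_{s(e, k_1, \ldots, k_m)}(x_1, \ldots, x_n) \simeq \varphi^V_e(x_1, \ldots, x_n, k_1, \ldots, k_m)$ for every $e \in \II_{m+n}$ and all natural $k_1, \ldots, k_m$, which is (\csmn). I expect the only real obstacle to be the arity bookkeeping in the two applications of (\cek) — verifying that the maxima of the inner arities come out as $n$ and $m+1$ — together with the degenerate cases $m = 0$ or $n = 0$, where some of the lists above are empty but nothing else changes.
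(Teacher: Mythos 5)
Your proof is correct and follows essentially the same route as the paper's: the same identity rewriting $\varphi^V_e(\sx, k_1, \ldots, k_m)$ as a composition with projections and $0$-ary constants, the same application of (\cek) to get the inner indexing function, and the same final composition (your $c \circ I^{j+1}_{m+1}$ are exactly the paper's auxiliary functions $\psi_j$) to show the resulting index map is itself a $V$-function. The arity bookkeeping you flag does come out as you expect, so there is nothing to add.
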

\begin{proof}
By (\cec),
there is a $V$-function $s'$ such that,
for every $k$, we have $s'(k) \in \II_0$ and $\varphi^V_{s'(k)} \simeq k$.
Obviously, for all natural numbers $k_1, \ldots, k_m$ and $e \in \II_{n+m}$,
\begin{equation*}
   \varphi^V_e(I^1_n(\sx), \ldots, I^n_n(\sx), \varphi^V_{s'(k_1)}, \ldots, \varphi^V_{s'(k_m)})
   \simeq \varphi^V_e(\sx, k_1, \ldots, k_m),
\end{equation*}
where $\sx = x_1, \ldots, x_n$.
It follows from (\cb) that there are natural numbers $i_1, \ldots, i_n$ such that
%$i_j \in \II_{n+1}$ and $\varphi^V_{i_j}(\sx) \simeq I^j_{n+1}(\sx)$.
$i_j$ is an index of $I^j_n$ for all $j = 1, \ldots, n$.
It follows from (\cek) that there exists a $V$-function $s''$ such that,
for all natural numbers $k_1, \ldots, k_m$ and $e \in \II_{n+m}$,
$s''(e, i_1, \ldots, i_n, s'(k_1), \ldots, s'(k_m)) \in \II_n$
and
\begin{equation*}
  \varphi^V_{s''(e, i_1, \ldots, i_n, s'(k_1), \ldots, s'(k_m))}(\sx)
  \simeq
  \varphi^V_e(I^1_n(\sx), \ldots, I^n_n(\sx), \varphi^V_{s'(k_1)}, \ldots, \varphi^V_{s'(k_m)}),
\end{equation*}
where $\sx = x_1, \ldots, x_n$.
Thus for all natural numbers $k_1, \ldots, k_m$ and $e \in \II_{n+m}$,
\begin{equation}\label{l3_eq_1}
  \varphi^V_{s''(e, i_1, \ldots, i_n, s'(k_1), \ldots, s'(k_m))}(x_1, \ldots, x_n)
  \simeq \varphi^V_e(x_1, \ldots, x_n, k_1, \ldots, k_m)
\end{equation}
For each $j = 1, \ldots, m$ denote by $\psi_j$ the function such that
$\psi_j(x, \sy) \simeq s'(I^{j+1}_{m+1}(x, \sy)),$
where $\sy = y_1, \ldots, y_m$.
It follows from (\cb), (\cek) that  $\psi_j$ is a $V$-function
for all $j = 1, \ldots, m$.
By (\cec), there are natural numbers $l_1, \ldots, l_n$ such that
$\varphi^V_{l_j} \simeq i_j$ for all $j = 1, \ldots, n$.
It follows from (\cek) that there exists a $V$-function $s$ such that
$$
  s(x, \sy) \simeq s''(I^1_{n+1}(x, \sy), \varphi^V_{l_1}, \ldots, \varphi^V_{l_n}, \psi_1(x, \sy), \ldots, \psi_m(x, \sy)),
$$
where $\sy = y_1, \ldots, y_m$.
Since $\psi_j(x, y_1, \ldots, y_m) \simeq s'(y_j)$ for all $j = 1, \ldots, m$
and $\varphi^V_{l_j} \simeq i_j$ for all $j = 1, \ldots, n$,
we see that
\begin{equation}\label{l3_eq_2}
  s(x, y_1, \ldots, y_m) \simeq s''(x, i_1, \ldots, i_n, s'(y_1), \ldots, s'(y_m)).
\end{equation}
From \eqref{l3_eq_1}, \eqref{l3_eq_2} it follows that
$$
\varphi^V_{s(e, k_1, \ldots, k_m)}(x_1, \ldots, x_n) \simeq \varphi^V_e(x_1, \ldots, x_n, k_1, \ldots, k_m)
$$
for all $e \in \II_n$.

\end{proof}

\subsection{Basic Predicate Calculus}
Basic Predicate Calculus ($\BQC$) is introduced in \cite{ruitenburg}.

\textit{The language of $\BQC$} contains
a countably infinite set of predicate symbols for each finite arity,
%the constant symbols $0,\ 1,\ \ldots$,
a countably infinite set of variables,
parentheses,
the logical constants $\bot$ (falsehood), $\top$ (truth),
the logical connectives $\land$, $\lor$, $\to$
and the quantifiers $\forall$, $\exists$.
%Constant symbols occur as function symbols of arity 0.
% We usually include the binary predicate = for equality.
%The language of $\BQC$ is denoted by $L_\BQC$.
%By $L_\BQC$ denote the language of $\BQC$.
Suppose $M \subseteq \NN$, denote by $L^M_\BQC$
the extension of the language of $\BQC$ by individual constants from the set $M$.
Thus the language of $\BQC$ is a special case of $L^M_\BQC$ for $M = \varnothing$.
We write $L_\BQC$ instead of $L^\varnothing_\BQC$.

\textit{Terms} of $L^M_\BQC$ are constants from $M$ and variables.
\textit{Atoms} of $L^M_\BQC$ are $\bot,\ \top$, and expressions of the form $P(t_1, \ldots, t_n)$,
where $P$ is an $n$-ary predicate symbol
and $t_1, \ldots, t_n$ are terms of $L^M_\BQC$.
\textit{Formulas} of $L^M_\BQC$ are built up %from atoms
according to the following grammar:
\begin{equation*}\label{grammarLBQC}
  A,\, B ::= \Phi \mid A \land B \mid A \lor B \mid \forall \sx\,(A \to B) \mid \exists y\, A;
\end{equation*}
here
$\Phi$ is an atom of $L^M_\BQC$,
$\sx$ is a (possibly empty) list of distinct variables,
and $y$ is a variable.
We write $A \to B$ instead of $\forall\,(A \to B)$.
Terms and formulas of $L^M_\BQC$ will be called \textit{$M$-terms} and \textit{$M$-formulas}, for short.
At the same time formulas of $L_\BQC$ are said to be \textit{formulas}.
%and terms of the language $L_\BQC$ are called \textit{terms}.

%Thus, the implication is the universal quantification with an empty list of variables.
%Terms, atomic formulas, and formulas are defined as usual,
%except that for universal quantification we have the more elaborate rule:
%if $A$ and $B$ are formulas, and $\sx$ is a (possibly empty) list of distinct variables,
%then $\forall \sx\,(A \to B)$ is also a formula.
%We may write $A \to B$ for $\forall\,(A \to B)$,
%that is, implication is universal quantification with an empty list of variables.

Free and bound variables are defined in the usual way.
An occurrence of a variable $x$ in an $M$-formula $A$ is \textit{free}
if it
is not in the scope of a quantifier $\exists x$ or $\forall \sz$ in $A$, where $x$ is in $\sz$.
An occurrence of a variable in an $M$-formula that is not free is called \textit{bound}.
We say that a variable $x$ is a \textit{free variable} (\textit{bound variable}) of an $M$-formula $A$
if there exists a free (bound) occurrence of $x$ in $A$.
A sentence of $L^M_\BQC$ is a formula of $L^M_\BQC$ without free variables.
Sentences of $L^M_\BQC$ are called \textit{$M$-sentences}, and sentences of $L_\BQC$ simply \textit{sentences}, for short.

An $M$-term $t$ is called \textit{free} for a variable $x$ in a $M$-formula $A$
if for each variable $y$ in $t$ there is no occurrence of $x$
% in $A$ such that the occurrence is
in the scope of a quantifier $\exists y$ or %a quantifier %$\forall \ldots,\ y,\ \ldots$
$\forall \sz$ for some $\sz$ such that $y$ is in $\sz$.
Let $t_1, \ldots, t_n$ be $M$-terms, $x_1, \ldots, x_n$ be distinct variables,
and $A$ be an $M$-formula,
denote by $[t_1, \ldots, t_n/x_1, \ldots, x_n] A$
the result of substituting $t_1, \ldots, t_n$ for all free occurrences of $x_1, \ldots, x_n$
in a formula $A'$
%were $A'$ is any formula such that
%where $A'$ is
obtained from $A$ by renaming all bound variables
in such a way that, for each $i = 1, \ldots, n$, the $M$-term $t_i$ is free for $x_i$ in $A'$.

Suppose $A$ is an $M$-formula
and all free variables of $A$ are in $\sx$,
where $\sx$ is a list of distinct variables.
%then by $A(\sx)$ denote $A$.
By the statement ``$A(\sx)$ is a $M$-formula''
we mean the conjunction of statements: ``$A$ is an $M$-formula'',
``$\sx$ is a list of distinct variables'',
and ``all free variables of $A$ are in $\sx$''.

If $\st = t_1, \ldots, t_n$ is a list of $M$-terms,
then put $|\st| \rightleftharpoons n$.
Let $A(\sx)$ be an $M$-formula
and $\st$ be a list of $M$-terms such that $|\st| = |\sx|$;
then by $A(\st)$ denote $[\st/ \sx] A$.

%A \textit{closed term} is a term without free variables.

%Если $A(x_1,\ldots,x_n)$ --- формула, то в этом случае иногда будем писать
%$A(t_1,\ldots,t_n)$ вместо $[t_1, \ldots, t_n/x_1, \ldots, x_n]A$.

A \textit{sequent} is an expression of the form $A \Rightarrow B$, where $A$ and $B$ are formulas.

%Axioms and rules of $\BQC$ are in the language $L_\BQC$.

The axioms of $\BQC$ are:
\smallskip
%\begin{itemize}
%  \item[A1)] $A \Rightarrow A$;
%  \item[A2)] $A \Rightarrow \top$;
%\end{itemize}

A1) $A \Rightarrow A$;

A2) $A \Rightarrow \top$;

A3) $\bot \Rightarrow A$;

A4) $A\land \exists x\,B \Rightarrow \exists x\,(A\land  B)$, where $x$ is not free in $A$;

A5) $A\land (B\lor C) \Rightarrow (A\land  B)\lor (A\land  C)$;

A6) $\forall \overline x\,(A\to B)\land  \forall \overline x\,(B\to C) \Rightarrow \forall\overline x\,(A\to C)$;

A7) $\forall \overline x\,(A\to B)\land  \forall \overline x\,(A\to C) \Rightarrow \forall \overline x\,(A\to B\land  C)$;

A8) $\forall \overline x\,(B\to A)\land \forall \overline x\,(C\to A) \Rightarrow \forall \overline x\,(B\lor C\to A)$;

A9) $\forall \overline x\,(A \to B) \Rightarrow \forall \overline x\,([\sy/\sx] A \to [\sy/\sx] B)$;

A10) $\forall \overline x (A \to B) \Rightarrow\forall \sy (A \to B)$,
where no variable in $\sy$ is free in $\forall \overline x (A \to B)$;%the left hand side;

A11) $\forall \overline x,x\,(B\to A) \Rightarrow \forall \overline x\,(\exists x\, B\to A)$, where $x$ is not free in $A$.
\medskip

The rules of $\BQC$ are:
\medskip

R1) $\frac{\displaystyle A\Rightarrow B\; B\Rightarrow C}{\displaystyle A\Rightarrow C}$;
\medskip

R2) $\frac{\displaystyle A\Rightarrow B\; A\Rightarrow C}{\displaystyle A\Rightarrow B\land  C}$;
\medskip

R3) $\frac{\displaystyle A\Rightarrow B\land  C}{\displaystyle A\Rightarrow B}$ (a),\;
$\frac{\displaystyle A\Rightarrow B\land  C}{\displaystyle A\Rightarrow C}$ (b);
\medskip

R4) $\frac{\displaystyle B\Rightarrow A\; C\Rightarrow A}{\displaystyle B\lor C\Rightarrow A}$;
\medskip

R5) $\frac{\displaystyle B\lor C\Rightarrow A}{\displaystyle B\Rightarrow A}$ (a),\;
$\frac{\displaystyle B\lor C\Rightarrow A}{\displaystyle C\Rightarrow A}$ (b);
\medskip

R6) $\frac{\displaystyle A \Rightarrow B}{\displaystyle [\sy/\sx] A \to [\sy/\sx] B}$;
\medskip

R7) $\frac{\displaystyle B\Rightarrow A}{\displaystyle \exists x\, B\Rightarrow A}$, where $x$ is not free in $A$;
\medskip

R8) $\frac{\displaystyle \exists x\,B\Rightarrow A}{\displaystyle B\Rightarrow A}$, where $x$ is not free in $A$;
\medskip

R9) $\frac{\displaystyle A\land  B\Rightarrow C}{\displaystyle A\Rightarrow \forall \overline x(B\to C)}$,
where each variable in $\sx$ is not free in $A$.
\medskip

In the axioms and rules of $\BQC$\ $A,\ B,\ C$ are formulas, % of the language $L_\BQC$,
$\sx$ and $\sy$ are lists of distinct variables such that $|\sx| = |\sy|$,
and $x$ is a variable.

Given a sequent $S$,
we write $\BQC \vdash S$ if $S$ is derivable in $\BQC$.
We say that a formula $A$ is derivable in $\BQC$ if $\BQC \vdash \top \Rightarrow A$.

\subsection{$V$-realizability}

% \cite{plisko1983} - абсолютная реализуемость
% \cite{kon_gr_for_lang_ar},\cite{kon_gr_for_lang_ar_IS} - обобщенная реализуемость для языка арифметики
% \cite{kon_MP},\cite{kon_MP_IS} - обобщенная реализуемость в ограниченной области

In \cite{kon_gr_for_lang_ar,kon_gr_for_lang_ar_IS} we introduced a notion of $V$-realizability for the language of arithmetic.
Using methods of \cite{plisko1983,kon_MP,kon_MP_IS},
in this paper
we define a notion of absolute $V$-realizability in some domain $M \subseteq \NN$
for the formulas of $L_\BQC$.

Suppose $M \subseteq \NN$,
% Ссылка на Плиско (обобщенные предикаты), (реалFromуемость в ограниченной области)
we call any total function from $M^n$ to $2^\NN$ an \textit{$n$-ary generalized predicate} on $M$,
where $2^\NN$ is the set of all subsets of $\NN$.
A mapping $f$ is called an \textit{$M$-evaluation} if
%for all $n$
$f(P)$ is an $n$-ary generalized predicate on $M$
whenever $P$ is an $n$-ary predicate symbol of $L_\BQC$.
We write $P^f$ instead of $f(P)$.
We say that $f$ is an \textit{evaluation}
if $f$ is an $M$-evaluation for some $M \subseteq \NN$.

\begin{definition}
Let $e$ be a natural number, $M$ a subset of $\NN$, $f$ an $M$-evaluation, and $A$ an $M$-sentence.
The relation  ‘‘$e$ $V$-\textit{realizes} $A$ \textit{on} $f$'' is denoted
$e \rvf A$ and
is defined by induction on the number of logical connectives
and quantifiers in $A$:
\begin{itemize}
\item there is no $e$ such that $e \rvf \bot$;
\smallskip

\item $e \rvf \top$ for all $e$;
\smallskip

\item $e \rvf P(a_1, \ldots, a_n) \rightleftharpoons e \in P^f(a_1, \ldots, a_n)$,
where $P$ is an $n$-ary predicate symbol
and $a_1, \ldots, a_n \in M$;
\smallskip

\item $e \rvf (\Phi \land \Psi) \rightleftharpoons$ $\pp_1 e \rvf \Phi$ and $\pp_2 e \rvf \Psi$;
\smallskip

\item $e \rvf (\Phi \lor \Psi) \rightleftharpoons$
$(\pp_1 e = 0$ and $\pp_2 e \rvf \Phi)$
or $(\pp_1 e = 1$ and $\pp_2 e \rvf \Psi)$;
\smallskip

\item $e \rvf \exists x \:\Phi(x) \rightleftharpoons \pp_1 e \in M$ and $\pp_2 e \rvf \Phi(\pp_1 e)$;
\smallskip

\item $e \rvf \forall x_1, \ldots, \forall x_n\,(\Phi(x_1, \ldots, x_n) \to \Psi(x_1, \ldots, x_n)) \rightleftharpoons$
$e \in \II_{n+1}$  and, for all
%\footnote{
%Однако, если в списке $x_1, \ldots, x_n$ на некоторых позициях $i$ and $j$ стоят одинаковые переменные $x_i$ and $x_j$,
%то мы не допускаем рассмотрение тех списков $a_1, \ldots, a_n$, в которых $a_i \not= a_j$.}
$s \in \NN$,\ $a_1, \ldots, a_n \in M$,
if $s \rvf \Phi(a_1, \ldots, a_n)$, then
$\varphi^V_e(a_1, \ldots, a_n, s)$ is defined
and
$\varphi^V_e(a_1, \ldots, a_n, s)\rvf \Psi(a_1, \ldots, a_n)$.

\end{itemize}
\end{definition}
A sentence $A$ is called \textit{absolutely $V$-realizable over all domains}
if there exists a natural number $e$ such that, for all $M \subseteq \NN$, we have $e \rvf A$ whenever $f$ is an $M$-evaluation.
%if there is a natural number $e$ such that $e \rvf A$ whenever $f$ is an evaluation.

We say that a list of distinct variables $\sx$ is \textit{admissible} for a sequent $A \Rightarrow B$
if all free variables of the formulas $A$ and $B$ are in $\sx$.
By definition, put
\begin{equation*}
    e \rvfx{\sx} A \Rightarrow B \rightleftharpoons
    e \rvf \forall \sx\:(A \to B);
  \end{equation*}
here $e$ is a natural number,
$f$ is an evaluation,
$A \Rightarrow B$ is a sequent,
and
$\sx$ is an admissible list of variables for $A \Rightarrow B$.
%$\sx$ is a list of variables
%such that $\sx$ is admissible for the sequent $A \Rightarrow B$.

\begin{lemma}\label{l_rvfx_tr}
Let $A \Rightarrow B$ be a sequent,
$x_1, \ldots, x_n$ an admissible list of variables for $A \Rightarrow B$,
and $p$ a permutation of $\{1, \ldots, n\}$.
For all $e \in \II_{n+1}$ there exists $e' \in \II_{n+1}$ such that,
for every evaluation~$f$,
$e \rvfx{x_{p(1)}, \ldots, x_{p(n)}} A \Rightarrow B$
iff $e' \rvfx{x_1, \ldots, x_n} A \Rightarrow B$.
\end{lemma}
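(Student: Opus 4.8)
The plan is to reduce the claim to property $(\cpp)$, which is available by the lemma above. Unfolding the definition of $\rvfx{}$ through the clause for the universal quantifier, the statement $e \rvfx{x_{p(1)}, \ldots, x_{p(n)}} A \Rightarrow B$ amounts to: $e \in \II_{n+1}$ and, for all $a_1, \ldots, a_n \in M$ and all $s \in \NN$, if $s$ $V$-realizes $[a_1, \ldots, a_n/x_{p(1)}, \ldots, x_{p(n)}]A$ on $f$, then $\varphi^V_e(a_1, \ldots, a_n, s)$ is defined and $V$-realizes $[a_1, \ldots, a_n/x_{p(1)}, \ldots, x_{p(n)}]B$ on $f$; whereas $e' \rvfx{x_1, \ldots, x_n} A \Rightarrow B$ is the same statement with the list $x_1, \ldots, x_n$, index $e'$, and the corresponding substitutions. (Note that $x_{p(1)}, \ldots, x_{p(n)}$ is again an admissible list for $A \Rightarrow B$, so the left-hand relation is well formed.) The two statements differ only by a reindexing of the first $n$ arguments, so the target is an $e'$ with $\varphi^V_{e'}(b_1, \ldots, b_n, s) \simeq \varphi^V_e(b_{p(1)}, \ldots, b_{p(n)}, s)$.

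First I would extend $p$ to the permutation $p'$ of $\{1, \ldots, n+1\}$ that agrees with $p$ on $\{1, \ldots, n\}$ and fixes $n+1$, and apply $(\cpp)$ to $p'$: this yields a $V$-function $\sigma$ with $\sigma(e) \in \II_{n+1}$ and $\varphi^V_{\sigma(e)}(x_1, \ldots, x_n, x_{n+1}) \simeq \varphi^V_e(x_{p(1)}, \ldots, x_{p(n)}, x_{n+1})$ for every $e \in \II_{n+1}$. Setting $e' = \sigma(e)$ then gives $e' \in \II_{n+1}$ together with $\varphi^V_{e'}(b_1, \ldots, b_n, s) \simeq \varphi^V_e(b_{p(1)}, \ldots, b_{p(n)}, s)$ for all natural numbers $b_1, \ldots, b_n, s$.

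Next I would verify the matching substitution identity: for $C \in \{A, B\}$ and $a_1, \ldots, a_n \in M$, putting $b_i = a_{p^{-1}(i)}$ (equivalently $b_{p(j)} = a_j$), one has $[a_1, \ldots, a_n/x_{p(1)}, \ldots, x_{p(n)}]C = [b_1, \ldots, b_n/x_1, \ldots, x_n]C$. This is immediate: the substituted objects are constants, so no renaming of bound variables intervenes, and in either substitution each free occurrence of $x_k$ in $C$ is replaced by $a_{p^{-1}(k)} = b_k$; since all free variables of $C$ lie among $x_1, \ldots, x_n$ by admissibility, the two results coincide.

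Finally I would assemble the equivalence. The assignment $(a_1, \ldots, a_n) \mapsto (b_1, \ldots, b_n)$ given by $b_{p(j)} = a_j$ is a bijection of $M^n$ onto itself; under it, by the substitution identity the antecedent and succedent $M$-sentences in the two universal clauses coincide, and by the $\sigma$-identity $\varphi^V_{e'}(b_1, \ldots, b_n, s)$ equals $\varphi^V_e(a_1, \ldots, a_n, s)$ (both defined or both undefined). Hence the universal clause defining $e \rvfx{x_{p(1)}, \ldots, x_{p(n)}} A \Rightarrow B$ holds precisely when the one defining $e' \rvfx{x_1, \ldots, x_n} A \Rightarrow B$ does, for every evaluation $f$. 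I do not expect a genuine obstacle: everything reduces to $(\cpp)$ plus a trivial substitution lemma for constants, and the only point requiring care is keeping track of $p$ versus $p^{-1}$ between the permutation of arguments and the permutation relating the two substitutions.
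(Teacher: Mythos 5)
Your proof is correct and follows essentially the same route as the paper: apply (\cpp) to the permutation of $\{1,\ldots,n+1\}$ extending $p$ and fixing the last argument to obtain $e'$ with $\varphi^V_{e'}(b_1, \ldots, b_n, s) \simeq \varphi^V_e(b_{p(1)}, \ldots, b_{p(n)}, s)$, and then unfold the definition of $\rvfx{}$; the paper leaves the final verification as ``easily checked'' while you spell out the substitution identity and the $p$ versus $p^{-1}$ bookkeeping, which is exactly the content being elided.
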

\begin{proof}
It follows from (\cpp) that, for all $e \in \II_{n+1}$, there exists $e' \in \II_{n+1}$
such that
\begin{equation*}
    \varphi^V_{e'}(k_1, \ldots, k_n, a) \simeq \varphi^V_e(k_{p(1)}, \ldots, k_{p(n)}, a)
\end{equation*}
for all natural numbers $k_1, \ldots, k_n, a$.
It can be easily checked that, for every evaluation~$f$, we have
$e \rvfx{x_{p(1)}, \ldots, x_{p(n)}} A \Rightarrow B$
if and only if $e' \rvfx{x_1, \ldots, x_n} A \Rightarrow B$.
\end{proof}

\begin{lemma}\label{l_rvfx_ficvar}
  Let $A \Rightarrow B$ be a sequent,
  $z_1, \ldots, z_n$ an admissible list of variables for $A \Rightarrow B$,
  and $u_1, \ldots, u_m$ a list of variables such that
  the list $z_1, \ldots, z_n, u_1, \ldots, u_m$ is admissible for $A \Rightarrow B$.
  For all $e \in \II_{n+1}$ there exists $e' \in \II_{n+m+1}$ such that,
  for every evaluation~$f$,
  $e \rvfx{z_1, \ldots, z_n} A \Rightarrow B$
  iff $e' \rvfx{z_1, \ldots, z_n, u_1, \ldots, u_m} A \Rightarrow B$.
\end{lemma}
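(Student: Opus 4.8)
The argument follows the pattern of the proof of Lemma~\ref{l_rvfx_tr}. First I would produce, from a given $e \in \II_{n+1}$, an index $e' \in \II_{n+m+1}$ of the $V$-function that simply discards the $m$ new argument places, and then verify the stated equivalence by unwinding the clause for the universal quantifier in the definition of $V$-realizability.

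\emph{Construction of $e'$.} Let $e \in \II_{n+1}$. Applying (\cefp) $m$ times, we obtain $e_1 \in \II_{n+m+1}$ such that
\begin{equation*}
  \varphi^V_{e_1}(k_1, \ldots, k_n, a, j_1, \ldots, j_m) \simeq \varphi^V_e(k_1, \ldots, k_n, a)
\end{equation*}
for all natural numbers $k_1, \ldots, k_n, a, j_1, \ldots, j_m$, the places occupied by $j_1, \ldots, j_m$ being the dummies appended at the end. Let $p$ be the permutation of $\{1, \ldots, n+m+1\}$ that fixes $1, \ldots, n$, sends $n+1$ to $n+m+1$, and sends $n+1+i$ to $n+i$ for $1 \le i \le m$. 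By (\cpp) there is $e' \in \II_{n+m+1}$ with $\varphi^V_{e'}(v_1, \ldots, v_{n+m+1}) \simeq \varphi^V_{e_1}(v_{p(1)}, \ldots, v_{p(n+m+1)})$, hence
\begin{equation*}
  \varphi^V_{e'}(k_1, \ldots, k_n, j_1, \ldots, j_m, a) \simeq \varphi^V_e(k_1, \ldots, k_n, a)
\end{equation*}
for all natural numbers $k_1, \ldots, k_n, j_1, \ldots, j_m, a$. (One could also obtain such an $e'$ directly from (\cb) and (\cek) by composing $\varphi^V_e$ with the projections $I^1_{n+m+1}, \ldots, I^n_{n+m+1}, I^{n+m+1}_{n+m+1}$, in the style of the proofs above.)

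\emph{Verification.} Fix an $M$-evaluation $f$. Since $z_1, \ldots, z_n, u_1, \ldots, u_m$ is a list of \emph{distinct} variables containing all free variables of $A$ and of $B$, none of $u_1, \ldots, u_m$ occurs free in $A$ or in $B$; hence for all $k_1, \ldots, k_n, j_1, \ldots, j_m \in M$ we have $[k_1, \ldots, k_n, j_1, \ldots, j_m/z_1, \ldots, z_n, u_1, \ldots, u_m]\,A = [k_1, \ldots, k_n/z_1, \ldots, z_n]\,A$ and similarly for $B$; denote the right-hand sides by $A(k_1, \ldots, k_n)$ and $B(k_1, \ldots, k_n)$. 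Unwinding the clause for $\forall$ in the definition of $e' \rvfx{z_1, \ldots, z_n, u_1, \ldots, u_m} A \Rightarrow B$, and using the displayed identity of the previous step (with $a$ playing the role of the realizer), this statement reduces to: $e' \in \II_{n+m+1}$, and for all $s \in \NN$, $k_1, \ldots, k_n \in M$ and $j_1, \ldots, j_m \in M$, if $s \rvf A(k_1, \ldots, k_n)$ then $\varphi^V_e(k_1, \ldots, k_n, s)$ is defined and $\varphi^V_e(k_1, \ldots, k_n, s) \rvf B(k_1, \ldots, k_n)$. Since the values $j_1, \ldots, j_m$ are immaterial in this condition and $e \in \II_{n+1}$ is given, comparison with the analogous unwinding of $e \rvfx{z_1, \ldots, z_n} A \Rightarrow B$ yields both implications; in passing from $e'$ to $e$ one instantiates $u_1, \ldots, u_m$ by arbitrary elements of $M$.

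There is no essential difficulty here: the construction of $e'$ is routine bookkeeping with argument places, and the equivalence is immediate once one observes that the new variables do not occur in $A$ or $B$. The point that deserves attention is the direction from $e'$ to $e$, where one needs values in $M$ for $u_1, \ldots, u_m$; when $m \ge 1$ one should first dispose of the case $M = \varnothing$ separately (for which $M^m = \varnothing$, so that the condition defining $e' \rvfx{\ldots}$ becomes vacuous).
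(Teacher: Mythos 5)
Your proof is correct and takes essentially the same route as the paper's: the paper likewise invokes (\cefp) and (\cpp) to produce $e' \in \II_{n+m+1}$ with $\varphi^V_{e'}(k_1, \ldots, k_n, k_{n+1}, \ldots, k_{n+m}, a) \simeq \varphi^V_e(k_1, \ldots, k_n, a)$ and then states that the equivalence ``can be easily checked.'' Your closing observation about the empty domain (where, for $n=0$ and $m\ge 1$, the right-hand condition becomes vacuous) is a genuine subtlety that the paper passes over silently — elsewhere it always works with $M \neq \varnothing$ — but it concerns the lemma's formulation rather than any defect in your argument.
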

\begin{proof}
It follows from (\cfp), (\cpp) that, for every $e \in \II_{n+1}$, there exists $e' \in \II_{n+m+1}$
such that, for all natural numbers $k_1, \ldots, k_{m+n}, a$, we have
\begin{equation*}
    \varphi^V_{e'}(k_1, \ldots, k_n, k_{n+1}, \ldots, k_{m+n}, a) \simeq \varphi^V_e(k_1, \ldots, k_n, a).
\end{equation*}
It can be easily checked that,
for every evaluation~$f$, we have
  $e \rvfx{z_1, \ldots, z_n} A \Rightarrow B$
  if and only if $e' \rvfx{z_1, \ldots, z_n, u_1, \ldots, u_m} A \Rightarrow B$.
\end{proof}

\begin{lemma}\label{l_rvfx_ficvar_del}
  Under the conditions of Lemma \ref{l_rvfx_ficvar},
  for all $e' \in \II_{n+m+1}$ there exists $e \in \II_{n+1}$ such that,
  for every evaluation~$f$,
  $e' \rvfx{z_1, \ldots, z_n, u_1, \ldots, u_m} A \Rightarrow B$
  if and only if $e \rvfx{z_1, \ldots, z_n} A \Rightarrow B$.
\end{lemma}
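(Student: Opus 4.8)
The plan is to recover $e$ from $e'$ by collapsing the $m$ dummy argument slots of $\varphi^V_{e'}$, in the same spirit as the proofs of Lemmas~\ref{l_rvfx_tr} and \ref{l_rvfx_ficvar}. The fact to exploit is that, since $z_1,\ldots,z_n$ is already admissible for $A\Rightarrow B$, none of $u_1,\ldots,u_m$ occurs free in $A$ or in $B$; hence for all $a_1,\ldots,a_n,b_1,\ldots,b_m\in M$ we have
\begin{equation*}
[a_1,\ldots,a_n,b_1,\ldots,b_m/z_1,\ldots,z_n,u_1,\ldots,u_m]A = [a_1,\ldots,a_n/z_1,\ldots,z_n]A,
\end{equation*}
and likewise for $B$. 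So when the definitions of $e'\rvfx{z_1,\ldots,z_n,u_1,\ldots,u_m}A\Rightarrow B$ and $e\rvfx{z_1,\ldots,z_n}A\Rightarrow B$ are unfolded, the formulas that must be realized are literally the same, and only the arity of, and the function applied, differ.

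Concretely, for $n\ge 1$ I would use (\cb) and (\cek) (exactly as in the preceding lemmas, with (\cec) to dispose of the auxiliary indices) to obtain a $V$-function $\sigma$ such that $\sigma(e')\in\II_{n+1}$ for every $e'\in\II_{n+m+1}$ and
\begin{equation*}
\varphi^V_{\sigma(e')}(k_1,\ldots,k_n,a)\simeq\varphi^V_{e'}(k_1,\ldots,k_n,\underbrace{k_1,\ldots,k_1}_{m},a)
\end{equation*}
for all $k_1,\ldots,k_n,a$; equivalently one may reposition the realizer slot by (\cpp) and then substitute a fixed number into the last $m$ slots by (\csmn). Put $e=\sigma(e')$. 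Given an $M$-evaluation $f$, the implication from $e'\rvfx{z_1,\ldots,z_n,u_1,\ldots,u_m}A\Rightarrow B$ to $e\rvfx{z_1,\ldots,z_n}A\Rightarrow B$ is then routine: for $a_1,\ldots,a_n\in M$ and $t$ realizing $[a_1,\ldots,a_n/z_1,\ldots,z_n]A$, apply the hypothesis with the dummy values $b_1=\cdots=b_m=a_1\in M$ and combine with the displayed identities and the defining equation for $\varphi^V_{\sigma(e')}$.

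The converse implication is the step I expect to be the real obstacle. There one is given only that $\varphi^V_{e'}$ behaves correctly when the dummy slots carry copies of $a_1$, and must conclude that it behaves correctly for \emph{every} choice of dummy values from $M$; this is precisely the place where the absence of $u_1,\ldots,u_m$ from $A$ and $B$ has to be used in an essential way, and I would anticipate needing a sharper description of $e$ (reflecting the behaviour of $\varphi^V_{e'}$ uniformly in the dummy slots) rather than a single section. Finally, the degenerate case $n=0$ — where one passes between $\II_{m+1}$ and $\II_1$ and $A,B$ are sentences — I would treat on its own: for $M\ne\varnothing$ substitute an arbitrary element of $M$ for ``$k_1$'' in the argument above, while for $M=\varnothing$ one checks directly that both realizability assertions reduce to the same condition.
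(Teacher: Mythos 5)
Your construction is essentially the paper's: the paper also obtains $e$ as a section of $\varphi^V_{e'}$, using (\cpp) and (\csmn) to get
$\varphi^V_e(k_1,\ldots,k_n,a)\simeq\varphi^V_{e'}(k_1,\ldots,k_n,0,\ldots,0,a)$,
and then declares the equivalence ``easily checked''; the only difference is that you put $k_1$ rather than the constant $0$ in the dummy slots. For the direction that is actually used downstream (in Proposition~\ref{p_eq} and Theorem~\ref{t_main}), namely
$e'\rvfx{z_1,\ldots,z_n,u_1,\ldots,u_m}A\Rightarrow B$ implies $e\rvfx{z_1,\ldots,z_n}A\Rightarrow B$,
your choice is in fact the more careful one: the hypothesis only controls $\varphi^V_{e'}(a_1,\ldots,a_n,b_1,\ldots,b_m,s)$ for $b_1,\ldots,b_m\in M$, so the value placed in the dummy slots must lie in $M$, which $a_1$ does (when $n\ge 1$) and $0$ need not.

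That said, your proposal does have a genuine gap, and you have correctly located it: the converse implication. Knowing that $\varphi^V_{e'}$ behaves correctly on the diagonal section $b_1=\cdots=b_m=a_1$ (or on the section $b_1=\cdots=b_m=0$) says nothing about its behaviour at other tuples from $M^m$, so neither your $e$ nor the paper's satisfies the stated biconditional; no ``sharper description of $e$'' can repair this, since for fixed $n$ the condition $e\rvfx{z_1,\ldots,z_n}A\Rightarrow B$ cannot register the dependence of $e'\rvfx{z_1,\ldots,z_n,u_1,\ldots,u_m}A\Rightarrow B$ on $M$ through the dummy slots (already for $n=0$, $m=1$ one can choose $e'$ converging exactly when the dummy argument is $0$, which realizes over $M=\{0\}$ but not over $M=\{1\}$, while the truth of $e\rvfx{}A\Rightarrow B$ does not distinguish these two domains). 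So the ``only if'' half should simply be dropped or the lemma weakened to the one implication that Proposition~\ref{p_eq} needs. Separately, your treatment of $n=0$ does not work as described: the index $e$ must be fixed once and serve \emph{every} evaluation, so you cannot ``substitute an arbitrary element of $M$''; and for $M=\varnothing$ the two conditions do not reduce to the same thing (the statement with dummy variables is vacuously true, the one without is not). A uniform fix for the needed direction when $n=0$ would again require information about $\varphi^V_{e'}$ beyond a single section, so this degenerate case deserves more than the sentence you give it.
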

\begin{proof}
It follows from (\cpp), (\csmn) that for all $e' \in \II_{n+m+1}$ there exists $e \in \II_{n+1}$
such that, for all natural numbers $k_1, \ldots, k_n, a$,
\begin{equation*}
    \varphi^V_e(k_1, \ldots, k_n, a) \simeq \varphi^V_{e'}(k_1, \ldots, k_n, 0, \ldots, 0, a).
\end{equation*}
It can be easily checked that,
for every evaluation~$f$,
$e' \rvfx{z_1, \ldots, z_n, u_1, \ldots, u_m} A \Rightarrow B$
if and only if $e \rvfx{z_1, \ldots, z_n} A \Rightarrow B$.
\end{proof}

%From Lemmas \ref{l_rvfx_tr}, \ref{l_rvfx_ficvar}, \ref{l_rvfx_ficvar_del} it follows the next statement.
\begin{proposition}\label{p_eq}
 Let $S$ be a sequent,
 $\sx$ and $\sy$ admissible lists of variables for $S$,
 $|\sx| = n$ and $|\sy| = m$.
 For all $e \in \II_{n+1}$ there exists $e' \in \II_{m+1}$ such that,
 for every evaluation~$f$,\
 $e' \rvfx{\sx} S$
 if and only if $e \rvfx{\sy} S$.
\end{proposition}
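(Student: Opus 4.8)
The plan is to link $\rvfx{\sx}$ and $\rvfx{\sy}$ by a short chain of the three preceding lemmas, routed through an auxiliary list of variables that contains $\sy$ as an initial segment and is, at the same time, a reordering of an extension of $\sx$ by dummy variables.

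Regard $\sx$ and $\sy$ as lists of pairwise distinct variables; since both are admissible for $S$, each of them contains all free variables of $S$. Let $\sv$ be the list of the variables that occur in $\sy$ but not in $\sx$, written in the order in which they occur in $\sy$, and let $\su$ be the list of the variables that occur in $\sx$ but not in $\sy$. Then the concatenations $\sx, \sv$ and $\sy, \su$ are both admissible for $S$ (their entries are pairwise distinct, and they contain, respectively, $\sx$ and $\sy$, hence all free variables of $S$); they have the same underlying set of variables, and therefore also the same length, say $k$; consequently $\sx, \sv$ is obtained from $\sy, \su$ by some permutation of $\{1, \ldots, k\}$.

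Now fix $e \in \II_{n+1}$. Applying Lemma~\ref{l_rvfx_ficvar} with $\sz := \sx$ and dummy block $\sv$ yields an index $e_1$ such that $e \rvfx{\sx} S$ iff $e_1 \rvfx{\sx, \sv} S$ for every evaluation $f$. Applying Lemma~\ref{l_rvfx_tr} to the admissible list $\sy, \su$ and the permutation carrying it to $\sx, \sv$ yields an index $e_2$ such that $e_1 \rvfx{\sx, \sv} S$ iff $e_2 \rvfx{\sy, \su} S$ for every $f$. Finally, applying Lemma~\ref{l_rvfx_ficvar_del} with $\sz := \sy$ and dummy block $\su$ yields an index $e' \in \II_{m+1}$ such that $e_2 \rvfx{\sy, \su} S$ iff $e' \rvfx{\sy} S$ for every $f$. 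Concatenating the three equivalences gives $e' \rvfx{\sy} S$ iff $e \rvfx{\sx} S$ for every evaluation $f$, as required. (If $\sv$ is empty, or the permutation is the identity, or $\su$ is empty, the corresponding step is trivial; but the lemmas already cover these degenerate cases, so no special treatment is needed.)

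I do not expect a genuine difficulty here: the entire content is bookkeeping about lists of variables. The one place deserving care is the choice of the intermediate list $\sy, \su$, which must be simultaneously a permutation of an extension of $\sx$ by dummies and an extension of $\sy$ by dummies; one should also check, at each link of the chain, that the list involved is admissible for $S$ and has exactly the length required by the lemma being invoked — e.g.\ that $\sx, \sv$ and $\sy, \su$ share the common length $k$, so that $e_1, e_2 \in \II_{k+1}$ while $e' \in \II_{m+1}$.
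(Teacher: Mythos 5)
Your proof is correct and follows essentially the same route as the paper: a chain of Lemmas~\ref{l_rvfx_ficvar}, \ref{l_rvfx_tr}, and \ref{l_rvfx_ficvar_del}, differing only in that you pass through the union list (linking $\sx,\sv$ to $\sy,\su$ by a permutation) while the paper passes through the intersection list $\sz$ in four steps. The bookkeeping about admissibility and lengths is handled adequately, so nothing further is needed.
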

\begin{proof}
  Denote by $\sz$ a list of distinct variables such that, for every variable $w$, we have
  $w$ in $\sz$ if and only if $w$ in $\sx$ and $w$ in $\sy$.
  Note that $\sz$ is admissible for $S$.
  Let $\su$ be a list of distinct variables such that,
  for every variable $w$, we have $w$ in $\su$ if and only if $w$ in $\sx$ and $w$ is not in $\sy$.
  Denote by $\sv$ a list of distinct variables such that,
  for every variable $w$, we have $w$ in $\sv$ if and only if $w$ in $\sy$ and $w$ is not in $\sx$.
  Let $e \in \II_{n+1}$.
  It follows from
  Lemmas \ref{l_rvfx_tr}, \ref{l_rvfx_ficvar}, \ref{l_rvfx_ficvar_del} that
  there are natural numbers $e_1,\ e_2,\ e_3,\ e'$ such that
  \begin{equation*}
  e \rvfx{\sx} S \Longleftrightarrow
  e_1 \rvfx{\sz, \su} S \Longleftrightarrow
  e_2 \rvfx{\sz} S \Longleftrightarrow
  e_3 \rvfx{\sz, \sv} S \Longleftrightarrow
  e' \rvfx{\sy} S
 \end{equation*}
 for all evaluations $f$.
\end{proof}

\section{Main result}

Our main result is the following.
\begin{theorem}\label{t_main}
  If a sequent $S$ is derivable in $\BQC$
  and $\sr = r_1, \ldots, r_l$ is an admissible list of variables for $S$,
  then there exists a natural number $e$ such that $e \rvfx{\sr} S$ for all evaluations $f$.
\end{theorem}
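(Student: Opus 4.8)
The plan is to prove Theorem~\ref{t_main} by induction on the $\BQC$-derivation of $S$, producing at each node a natural number $e$ with $e \rvfx{\sx} S$ (for every evaluation $f$) for \emph{some} admissible list $\sx$; Proposition~\ref{p_eq} then yields the same for the prescribed list $\sr$. Every index we build depends only on the closure data fixed for $V$, never on $M$ or $f$, so the uniformity in $f$ required by the theorem comes for free. The base cases are the axioms A1--A11 and the inductive cases the rules R1--R9; in each case the desired index, and the $V$-function producing it from the relevant parameters, is supplied by $(\cb)$, $(\cek)$, $(\cec)$, $(\ceu)$ and their derived forms $(\cpp)$, $(\cefp)$, $(\csmn)$.

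For the axioms: A1 is realized by an index of the projection $\lambda\sa\,s.\,s$; A2 and A3 by an index of a total constant $\lambda\sa\,s.\,0$ (obtained from $(\cec)$ and $(\cefp)$), A3 vacuously since nothing realizes $\bot$. In A4 and A5 one unpacks the incoming realizer of the antecedent with $\pp_1,\pp_2$ and reassembles it with $\cc$ --- for A5 with a case split supplied by $(\ceu)$ --- which is a $V$-function by $(\cb)$, $(\cek)$. For A6--A8 a realizer $s$ of the antecedent carries $V$-indices $\pp_1 s,\pp_2 s$ of the two sub-implications, and the required realizer of the consequent is got by composing them (A6), by pairing their outputs with $\cc$ (A7), or by branching on the disjunction tag $\pp_1$ of the incoming realizer (A8); $(\cek)$ turns $\pp_1 s,\pp_2 s$ and the ambient values into the new index. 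Axioms A9--A11 are realized by rearranging argument positions: A9 reindexes the bound slots according to the substitution $[\sy/\sx]$, feeding ambient values in as constants (via $(\cpp)$, $(\cefp)$, $(\csmn)$, $(\cec)$); A10 sends the values of those $\sx$-variables that remain free in the conclusion down into the hypothesis's index while discarding the spurious $\sy$-slots; A11 sends $\pp_1 t,\pp_2 t$ of an incoming realizer $t$ of $\exists x\,B$ to the slots for $x$ and for $B$, using that $x$ is not free in $A$.

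The rules R1--R5 mirror the corresponding axioms, the two indices now coming from the induction hypothesis; for R1, R4, R5 the cut formula may have free variables outside $\sr$, so one first enlarges the admissible list via Proposition~\ref{p_eq}, applies the hypothesis there, composes with $(\cek)$ (and $(\ceu)$ for R4, R5), and returns to $\sr$. For R7 and R8 one passes between a realizer of $\exists x\,B\Rightarrow A$ and one of $B\Rightarrow A$ by stripping $\cc$ off the witness, $\varphi^V_e(\sa,\pp_1 t,\pp_2 t)$, respectively reassembling it, $\varphi^V_e(\sa,\cc(b,u))$, legitimately because $x$ is not free in $A$. For R9 a realizer $e_1$ of $A\land B\Rightarrow C$ on the enlarged list $\sr,\sx$ is converted, by two nested applications of $(\cek)$ --- an inner one that, given a realizer $t$ of $A$, returns an index of $\forall\sx(B\to C)$ acting by $\lambda\sk\,v.\,\varphi^V_{e_1}(\sa,\sk,\cc(t,v))$, and an outer one producing this index uniformly in $\sa,t$ --- into a realizer of $A\Rightarrow\forall\sx(B\to C)$. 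For R6, whose formula conclusion $C=[\sy/\sx]A\to[\sy/\sx]B$ stands for the sequent $\top\Rightarrow C$, one composes the substitution-induced renaming of positions, exactly as in A9, with the trivial packaging of the empty quantifier prefix.

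The main obstacle I expect is not any individual axiom or rule but the systematic bookkeeping of free variables forced by substitution (A9, R6) and by changes of quantifier prefix (A10, A11, R7--R9): the admissible list $\sr$ of a sequent must be reconciled with the lists occurring under its quantifiers and inside its subformulas after substitution, which in general calls for permuting, deleting and inserting argument slots and for absorbing ambient domain elements into composite indices as constants. Precisely this is what Proposition~\ref{p_eq}, Lemmas~\ref{l_rvfx_tr}--\ref{l_rvfx_ficvar_del} and the conditions $(\cpp)$, $(\cefp)$, $(\csmn)$, $(\cec)$ provide, so no new principle is needed; the labour lies in fixing, for each rule, a convenient normal form for the admissible lists of the premises and then assembling the composite $V$-index witnessing the conclusion.
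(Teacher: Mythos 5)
Your proposal is correct and follows essentially the same route as the paper: induction on the $\BQC$-derivation, with each axiom A1--A11 and rule R1--R9 handled by assembling a $V$-index from $(\cb)$, $(\cek)$, $(\cec)$, $(\ceu)$ and the derived conditions, and with Proposition~\ref{p_eq} doing the admissible-list bookkeeping. The only (immaterial) deviations are that A5 needs no $(\ceu)$ case split --- the tag $\pp_1\pp_2 d$ can simply be copied to the front uniformly --- and that the paper reads the conclusion of R6 as the sequent $[\sy/\sx]A \Rightarrow [\sy/\sx]B$ rather than as $\top \Rightarrow ([\sy/\sx]A \to [\sy/\sx]B)$.
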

\begin{proof}
  By induction on derivations of $S$.
  Suppose $S$ is an axiom of $\BQC$.
  \begin{itemize}
\item[A1)] Let $S$ be $A(\sr) \Rightarrow A(\sr).$
By (\cb)
there is a natural number $e$ such that, for all natural numbers $k_1, \ldots, k_l, d$, we have
\begin{equation}\label{fta1f}
  \varphi^V_e(k_1, \ldots, k_l, d) \simeq d.
\end{equation}
Let $\varnothing \not= M \subseteq \NN$ and $f$ be an $M$-evaluation.
Suppose
\begin{equation}\label{fta1g}
  d \rvf A(k_1, \ldots, k_l)
\end{equation}
for some natural numbers $d$ and $k_1, \ldots, k_l \in M$.
From \eqref{fta1f}, \eqref{fta1g} it follows that
\begin{equation}\label{fta1e}
  \varphi^V_e(k_1, \ldots, k_l, d) \rvf A(k_1, \ldots, k_l).
\end{equation}
Thus for all natural numbers $d$ and $k_1, \ldots, k_l \in M$ we have \eqref{fta1e}
whenever \eqref{fta1g}. Hence
$
  e \rvfx{\sr} A(\sr) \Rightarrow A(\sr).
$

  \item[A2)] Let $S$ be
$A(\sr) \Rightarrow \top.$
By (\cb)
 there is a natural number $e$ such that, for all natural numbers $k_1, \ldots, k_l, d$, we have \eqref{fta1f}.
Let $f$ be an evaluation.
It can be easily checked that
$
  e \rvfx{\sr} A(\sr) \Rightarrow \top.
$

  \item[A3)] Let $S$ be $\bot \Rightarrow A(\sr).$
It can be easily checked that, for every $e \in \II_{l+1}$, we have
$
  e \rvfx{\sr} \bot \Rightarrow A(\sr)
$
for all evaluations $f$.

  \item[A4)] Let $S$ be
$A(\sr) \land \exists x\, B(x, \sr) \Rightarrow \exists x\,(A(\sr) \land  B(x, \sr)).$
It follows from (\cb), (\cek), (\cefp), (\cpp) that
there is a natural number $e$ such that, for all natural numbers $k_1, \ldots, k_l, d$, we have
\begin{equation}\label{ft_a4_f}
  \varphi^V_e(k_1, \ldots, k_l, d) \simeq \cc(\pp_1\pp_2 d,\cc(\pp_1 d,\pp_2\pp_2 d)).
\end{equation}
Let $\varnothing \not= M \subseteq \NN$ and $f$ be an $M$-evaluation.
Suppose
\begin{equation}\label{ft_a4_b}
  d \rvf A(\sk) \land \exists x\,B(x, \sk)
\end{equation}
for some natural number $d$ and $\sk = k_1, \ldots, k_l \in M$.
Let us prove that
\begin{equation}\label{ft_a4_e}
  \varphi^V_e(k_1, \ldots, k_l, d) \rvf \exists x \:(A(\sk) \land B(x, \sk)).
\end{equation}
Using \eqref{ft_a4_b}, we get
\begin{equation}\label{ft_a4_b1}
  \pp_1 d \rvf A(\sk),
\end{equation}
\begin{equation}\label{ft_a4_b2}
  \pp_2 d \rvf \exists x\,B(x, \sk).
\end{equation}
From \eqref{ft_a4_b2} it follows that
\begin{equation}\label{ft_a4_b20}
  \pp_2\pp_2 d \rvf B(\pp_1\pp_2 d, \sk).
\end{equation}
Using \eqref{ft_a4_b1} and \eqref{ft_a4_b20}, we obtain
\begin{equation}\label{ft_a4_c1}
  \cc(\pp_1 d,\pp_2\pp_2 d) \rvf A(\sk) \land B(\pp_1\pp_2 d, \sk).
\end{equation}
From \eqref{ft_a4_c1} it follows that
\begin{equation}\label{ft_a4_cc}
  \cc(\pp_1\pp_2 d, \cc(\pp_1 d,\pp_2\pp_2 d)) \rvf \exists x \:(A(\sk) \land B(x, \sk)).
\end{equation}
Using \eqref{ft_a4_f} and \eqref{ft_a4_cc}, we obtain \eqref{ft_a4_e}.
Thus for all natural numbers $d$ and $k_1, \ldots, k_l \in M$
we have \eqref{ft_a4_e} whenever \eqref{ft_a4_b}.
Hence $e \rvfx{\sr} S$.

  \item[A5)] Let $S$ be
$A(\sr) \land (B(\sr) \lor C(\sr)) \Rightarrow (A(\sr) \land  B(\sr))\lor (A(\sr) \land  C(\sr)).$
By (\cb), (\cek), (\cefp), and (\cpp),
there is a natural number $e$ such that, for all natural numbers $k_1, \ldots, k_l, d$, we have
\begin{equation}\label{fta5f}
  \varphi^V_e(k_1, \ldots, k_l, d) \simeq \mathsf{c(p_1p_2}d, \mathsf{c(p_1}d, \mathsf{p_2p_2}d)).
\end{equation}
Let $\varnothing \not= M \subseteq \NN$ and $f$ be an $M$-evaluation.
Suppose
\begin{equation}\label{fta5r}
  d \rvf A(\sk) \land (B(\sk) \lor C(\sk))
\end{equation}
for some natural number $d$ and $\sk = k_1, \ldots, k_l \in M$.
Let us prove that
\begin{equation}\label{fta5e}
  \varphi^V_e(k_1, \ldots, k_l, d) \rvf (A(\sk) \land B(\sk)) \lor (A(\sk) \land C(\sk)).
\end{equation}
From \eqref{fta5r} it follows that
\begin{equation}\label{fta5r1}
  \pp_1 d \rvf A(\sk),
\end{equation}
\begin{equation}\label{fta5r2}
  \pp_2 d \rvf (B(\sk) \lor C(\sk)).
\end{equation}
Using \eqref{fta5r2}, we have
\begin{equation}\label{fta5r20}
  (\pp_1 \pp_2 d = 0 \text{ and } \pp_2 \pp_2 d \rvf B(\sk))
\text{ or } (\pp_1 \pp_2 d = 1 \text{ and } \pp_2 \pp_2 d \rvf C(\sk)).
\end{equation}
Using \eqref{fta5r1} and \eqref{fta5r20}, we obtain
\begin{equation}\label{fta5r20c1}
  \pp_1 \pp_2 d = 0 \land \cc(\pp_1 d, \pp_2 \pp_2 d) \rvf (A(\sk) \land B(\sk))
\end{equation}
or
\begin{equation}\label{fta5r20c2}
  \pp_1 \pp_2 d = 1 \land \cc(\pp_1 d, \pp_2 \pp_2 d) \rvf (A(\sk) \land C(\sk)).
\end{equation}
Hence
\begin{equation}\label{fta5r20cc}
  \mathsf{c(p_1p_2}d, \mathsf{c(p_1}d, \mathsf{p_2p_2}d))
  \rvf (A(\sk) \land B(\sk)) \lor (A(\sk) \land C(\sk)).
\end{equation}
Using \eqref{fta5f} and \eqref{fta5r20cc}, we obtan \eqref{fta5e}.
Thus for all natural numbers $d$ and $k_1, \ldots, k_l \in M$
we have \eqref{fta5e} whenever \eqref{fta5r}.
Hence $e \rvfx{\sr} S$.

\item[A6)] Let $S$ be
    $$\forall \sx\,(A(\sx, \sr)\to B(\sx, \sr)) \land  \forall \sx\,(B(\sx, \sr)\to C(\sx, \sr))
\Rightarrow \forall\sx\,(A(\sx, \sr)\to C(\sx, \sr))$$
and $|\sx| = n$.
It follows from (\cek), (\cb), (\cpp), and (\csmn) that
there exists a $V$-function $\kk$ such that, for all $b,\ c \in \II_{n+1}$, we have $\kk(b, c) \in \II_{n+1}$ and
\begin{equation}\label{fta6f}
  \varphi^V_{\mathsf{k}(b,c)}(m_1, \ldots, m_n, a) \simeq\varphi^V_c(m_1, \ldots, m_n, \varphi^V_b(m_1, \ldots, m_n, a))
\end{equation}
for all natural numbers $m_1, \ldots, m_n, a$.
By (\cek), (\cb), (\cfp), and (\cpp),

there is a natural number $e$ such that, for all natural numbers $k_1, \ldots, k_l, d$, we have
\begin{equation}\label{fta6n}
  \varphi^V_e(k_1, \ldots, k_l, d) \simeq \kk(\pp_1 d, \pp_2 d).
\end{equation}
%Let $f$ be $M$-evaluation for some nonempty $M \subseteq \NN$.
Let $\varnothing \not= M \subseteq \NN$ and $f$ be an $M$-evaluation.
Suppose
\begin{equation}\label{fta6r}
  d \rvf \forall \sx\,(A(\sx, \sk) \to B(\sx, \sk))\land  \forall \sx\,(B(\sx, \sk)\to C(\sx, \sk))
\end{equation}
for some natural number $d$ and $\sk = k_1, \ldots, k_l \in M$.
Let us prove that
\begin{equation}\label{fta6e}
  \varphi^V_e(k_1, \ldots, k_l, d) \rvf \forall \sx\,(A(\sx, \sk)\to C(\sx, \sk)).
\end{equation}
From \eqref{fta6r} it follows that
\begin{equation}\label{fta6r1}
  \pp_1 d \rvf \forall \sx\,(A(\sx, \sk) \to B(\sx, \sk)),
\end{equation}
\begin{equation}\label{fta6r2}
  \pp_2 d \rvf \forall \sx\,(B(\sx, \sk) \to C(\sx, \sk)).
\end{equation}
Suppose
\begin{equation}\label{fta6g}
  a \rvf A(\sm, \sk)
\end{equation}
for some natural number $a$ and $\sm = m_1, \ldots, m_n \in M$.
Using \eqref{fta6r1}, \eqref{fta6g}, we obtain
\begin{equation}\label{fta6c}
  \varphi^V_{\pp_1 d}(m_1, \ldots, m_n, a) \rvf B(\sm, \sk).
\end{equation}
From \eqref{fta6c}, \eqref{fta6r2} it follows that
\begin{equation}\label{fta6c0}
  \varphi^V_{\pp_2 d}(m_1, \ldots, m_n, \varphi^V_{\pp_1 d}(m_1, \ldots, m_n, a)) \rvf C(\sm, \sk).
\end{equation}
Using \eqref{fta6f} and \eqref{fta6c0}, we get
\begin{equation}\label{fta6c00}
  \varphi^V_{\kk(\pp_1 d, \pp_2 d)}(m_1, \ldots, m_n, a) \rvf C(\sm, \sk).
\end{equation}
Thus
for all natural numbers $a$ and $m_1, \ldots, m_n \in M$
we have \eqref{fta6c00} whenever \eqref{fta6g}.
Hence
\begin{equation}\label{fta6s}
  \kk(\pp_1 d, \pp_2 d) \rvf \forall \sx\,(A(\sx, \sk)\to C(\sx, \sk)).
\end{equation}
Using \eqref{fta6n} and \eqref{fta6s}, we obtain \eqref{fta6e}.
Thus for all natural numbers $d$ and $k_1, \ldots, k_l \in M$ it follows from \eqref{fta6r} that \eqref{fta6e}.
Hence $e \rvfx{\sr} S$.

  \item[A7)] Let $S$ be
$$\forall \overline x\,(A(\sx, \sr)\to B(\sx, \sr))\land  \forall \overline x\,(A(\sx, \sr)\to C(\sx, \sr))
\Rightarrow \forall \overline x\,(A(\sx, \sr)\to B(\sx, \sr)\land  C(\sx, \sr))$$
and $|\sx| = n$.
It follows from (\cek), (\cb), and (\csmn) that
there exists a $V$-function $\kk$ such that, for all $b,\ c \in \II_{n+1}$, we have
$\kk(b, c) \in \II_{n+1}$ and
\begin{equation}\label{fta7f}
  \varphi^V_{\mathsf{k}(b,c)}(m_1, \ldots, m_n, a) \simeq
\mathsf{c}(\varphi^V_b(m_1, \ldots, m_n, a), \varphi^V_c(m_1, \ldots, m_n, a))
\end{equation}
for all natural numbers $m_1, \ldots, m_n, a$.
By (\cek), (\cb), (\cfp), and (\cpp)
 there is a natural number $e$ such that, for all natural numbers $k_1, \ldots, k_l, d$, we have
\begin{equation}\label{fta7n}
  \varphi^V_e(k_1, \ldots, k_l, d) \simeq \kk(\pp_1 d, \pp_2 d).
\end{equation}
Let $\varnothing \not= M \subseteq \NN$ and $f$ be an $M$-evaluation.
Suppose for some natural numbers $d$ and $k_1, \ldots, k_l \in M$,
\begin{equation}\label{fta7r}
  d \rvf \forall \sx\,(A(\sx, \sk) \to B(\sx, \sk))\land  \forall \sx\,(A(\sx, \sk)\to C(\sx, \sk)),
\end{equation}
where $\sk = k_1, \ldots, k_l$.
Let us prove that
\begin{equation}\label{fta7e}
  \varphi^V_e(k_1, \ldots, k_l, d) \rvf \forall \sx\,(A(\sx, \sk)\to B(\sx, \sk) \land C(\sx, \sk)).
\end{equation}
From \eqref{fta7r} it follows that
\begin{equation}\label{fta7r1}
  \pp_1 d \rvf \forall \sx\,(A(\sx, \sk) \to B(\sx, \sk)),
\end{equation}
\begin{equation}\label{fta7r2}
  \pp_2 d \rvf \forall \sx\,(A(\sx, \sk) \to C(\sx, \sk)).
\end{equation}
Suppose for some natural numbers $a$ and $m_1, \ldots, m_n \in M$,
\begin{equation}\label{fta7g}
  a \rvf A(\sm, \sk),
\end{equation}
where $\sm = m_1, \ldots, m_n$.
From \eqref{fta7r1}, \eqref{fta7g} it follows that
\begin{equation}\label{fta7c1}
  \varphi^V_{\pp_1 d}(m_1, \ldots, m_n, a) \rvf B(\sm, \sk),
\end{equation}
Using \eqref{fta7r2} and \eqref{fta7g}, we get
\begin{equation}\label{fta7c2}
  \varphi^V_{\pp_2 d}(m_1, \ldots, m_n, a) \rvf C(\sm, \sk).
\end{equation}
From \eqref{fta7c1}, \eqref{fta7c2} it follows that
\begin{equation}\label{fta7u}
  \cc(\varphi^V_{\pp_1 d}(m_1, \ldots, m_n, a), \varphi^V_{\pp_2 d}(m_1, \ldots, m_n, a))
  \rvf B(\sm, \sk) \land C(\sm, \sk).
\end{equation}
Using \eqref{fta7f} and \eqref{fta7u}, we obtain
\begin{equation}\label{fta7uf}
  \varphi^V_{\kk(\pp_1 d, \pp_2 d)}(m_1, \ldots, m_n, a) \rvf B(\sm, \sk) \land C(\sm, \sk).
\end{equation}
Thus for all natural numbers $a$ and $m_1, \ldots, m_n \in M$
we have \eqref{fta7uf} whenever \eqref{fta7g}.
Hence
\begin{equation}\label{fta7s}
  \kk(\pp_1 d, \pp_2 d) \rvf \forall \sx\,(A(\sx, \sk)\to B(\sx, \sk) \land C(\sx, \sk)).
\end{equation}
From \eqref{fta7n}, \eqref{fta7s} it follows that \eqref{fta7e}.
Thus for all natural numbers $d$ and $k_1, \ldots, k_l \in M$ it follows from \eqref{fta7r}
that \eqref{fta7e}. Hence
$
 e \rvfx{\sr} S.
$

  \item[A8)] Let $S$ be
$$\forall \overline x\,(B(\sx, \sr)\to A(\sx, \sr))\land \forall \overline x\,(C(\sx, \sr)\to A(\sx, \sr))
\Rightarrow \forall \overline x\,(B(\sx, \sr)\lor C(\sx, \sr)\to A(\sx, \sr))$$
and $|\sx| = n$.
It follows from (\ceu), (\cb), (\cek), and (\csmn)
that there exists a $V$-function $\kk$ such that
if $b, c \in \II_{n+1}$, then
$\kk(b, c) \in \II_{n+1}$ and
for all natural numbers $m_1, \ldots, m_n, a$ we have
\begin{align}\label{fta8f_if}
\varphi^V_{\mathsf{k}(b, c)}(m_1, \ldots, m_n, a) &\simeq
    \varphi^V_b(m_1, \ldots, m_n, \mathsf{p}_2 a)\ \mbox{ if } \pp_1 a = 0, \\
\label{fta8f_notif}
\varphi^V_{\mathsf{k}(b, c)}(m_1, \ldots, m_n, a) &\simeq
    \varphi^V_c(m_1, \ldots, m_n, \mathsf{p}_2 a)\ \mbox{ if } \pp_1 a \not= 0.
\end{align}
%\begin{equation}\label{fta8f}
%\varphi^V_{\mathsf{k}(b, c)}(m_1, \ldots, m_n, a) \simeq
%\begin{cases}
%\varphi^V_b(m_1, \ldots, m_n, \mathsf{p}_2 a), & \text{ if $\mathsf{p}_1 a = 0$;} \\
%\varphi^V_c(m_1, \ldots, m_n, \mathsf{p}_2 a), & \text{ otherwise.}
%\end{cases}
%\end{equation}
By (\cek), (\cb), (\cfp), and (\cpp)
there is a natural number $e$ such that, for all natural numbers $k_1, \ldots, k_l, d$, we have
\begin{equation}\label{fta8n}
  \varphi^V_e(k_1, \ldots, k_l, d) \simeq \kk(\pp_1 d, \pp_2 d).
\end{equation}
Let $\varnothing \not= M \subseteq \NN$ and $f$ be an $M$-evaluation.
Suppose for some natural numbers $d$ and $k_1, \ldots, k_l \in M$,
\begin{equation}\label{fta8r}
  d \rvf \forall \sx\,(B(\sx, \sk) \to A(\sx, \sk))\land  \forall \sx\,(C(\sx, \sk)\to A(\sx, \sk)),
\end{equation}
where $\sk = k_1, \ldots, k_l$.
Let us prove that
\begin{equation}\label{fta8e}
  \varphi^V_e(k_1, \ldots, k_l, d) \rvf \forall \sx\,(B(\sx, \sk) \lor C(\sx, \sk) \to A(\sx, \sk)).
\end{equation}
From \eqref{fta8r} it follows that
\begin{equation}\label{fta8r1}
  \pp_1 d \rvf \forall \sx\,(B(\sx, \sk) \to A(\sx, \sk)),
\end{equation}
\begin{equation}\label{fta8r2}
  \pp_2 d \rvf \forall \sx\,(C(\sx, \sk) \to A(\sx, \sk)).
\end{equation}
Suppose for some natural numbers $a$ and $m_1, \ldots, m_n \in M$,
\begin{equation}\label{fta8g}
  a \rvf B(\sm, \sk) \lor C(\sm, \sk),
\end{equation}
where $\sm = m_1, \ldots, m_n$.
From \eqref{fta8g} it follows that either $\pp_1 a = 0$, or $\pp_1 a = 1$.
Let us consider $2$ cases.

Case $1$: $\pp_1 a = 0$.
Then it follows from \eqref{fta8g} that
\begin{equation}\label{fta8g1}
  \pp_2 a \rvf B(\sm, \sk).
\end{equation}
Using \eqref{fta8r1} and \eqref{fta8g1}, we get
\begin{equation}\label{fta8c1}
  \varphi^V_{\pp_1 d}(m_1, \ldots, m_n, \pp_2 a) \rvf A(\sm, \sk),
\end{equation}
From \eqref{fta8f_if}, \eqref{fta8c1} it follows that
\begin{equation}\label{fta8f1}
  \varphi^V_{\kk(\pp_1 d, \pp_2 d)}(m_1, \ldots, m_n, a) \rvf A(\sm, \sk).
\end{equation}

Case $2$: $\pp_1 a = 1$.
Then it follows from \eqref{fta8g} that
\begin{equation}\label{fta8g2}
  \pp_2 a \rvf C(\sm, \sk).
\end{equation}
Using \eqref{fta8r2} and \eqref{fta8g2}, we obtain
\begin{equation}\label{fta8c2}
  \varphi^V_{\pp_2 d}(m_1, \ldots, m_n, \pp_2 a) \rvf A(\sm, \sk),
\end{equation}
From \eqref{fta8f_notif}, \eqref{fta8c2} it follows that \eqref{fta8f1}.

Thus for all natural numbers $a$ and $m_1, \ldots, m_n \in M$ we have \eqref{fta8f1} whenever \eqref{fta7g}.
Hence
\begin{equation}\label{fta8s}
  \kk(\pp_1 d, \pp_2 d) \rvf \forall \sx\,(B(\sx, \sk) \lor C(\sx, \sk) \to A(\sx, \sk)).
\end{equation}
From \eqref{fta8n}, \eqref{fta8s} it follows that \eqref{fta8e}.
Thus for all natural numbers $d$ and $k_1, \ldots, k_l \in M$
we have \eqref{fta8e} whenever \eqref{fta8r}. Hence $e \rvfx{\sr} S$.

  \item[A9)] Let $S$ be
$\forall \overline x\,(A \to B) \Rightarrow \forall \overline x\,([\sz / \sx]A \to [\sz/ \sx]B),$
where $|\sx| = |\sz| = n$.
Any variable in $\sz$ is in $\sx$ or in $\sr$.
We will write $\sz(\sx, \sr)$ instead of $\sz$.
Thus $S$ has the form
$$\forall \overline x\,(A(\sx, \sr) \to B(\sx, \sr))
\Rightarrow \forall \sx\,(A(\sz(\sx, \sr), \sr) \to B(\sz(\sx, \sr), \sr)).$$
For all natural numbers $\sk = k_1, \ldots, k_l$ denote by $\sz(\sx, \sk)$
the result of substituting $\sk$ for $\sr$ in $\sz(\sx, \sr)$.
If $\sm = m_1, \ldots, m_n$ is a list of natural numbers,
then denote by $\sz(\sm, \sk)$
the result of replacing $\sx$ by $\sm$ in $\sz(\sx, \sk)$.
Obviously, $\sz(\sm, \sk)$ is a list of natural numbers and $|\sz(\sm, \sk)| = n$.
For all $i = 1, \ldots, n$ denote by $\zz_i$ a function such that, for all natural numbers $\sm,\ \sk$,
$\zz_i(\sm, \sk)$ is the $i$-th element of $\sz(\sm, \sk)$.
Clearly, any $\zz_i$ is $I^j_{n+l}$ for some $j$.

It follows from (\cek), (\cb), (\cfp), (\cpp), and (\csmn) that there exists a $V$-function $\kk$ such that, for all $d \in \II_{n+1}$, we have $\kk(d) \in \II_{n+l+1}$ and
for all natural numbers $m_1, \ldots, m_n, a, k_1, \ldots, k_l$,
\begin{equation}\label{fta9f0}
  \varphi^V_{\kk(d)}(m_1, \ldots, m_n, a, k_1, \ldots, k_l) \simeq
  \varphi^V_d(\zz_1(\sm, \sk), \ldots, \zz_n(\sm, \sk), a),
\end{equation}
where $\sm = m_1, \ldots, m_n$,\ $\sk = k_1, \ldots, k_l$.
Since the list $\zz_1(\sm, \sk), \ldots, \zz_n(\sm, \sk)$ is $\sz(\sm, \sk)$,
we see that
%то соотношение \eqref{fta9f0} переписывается так:
\begin{equation}\label{fta9f00}
  \varphi^V_{\kk(d)}(m_1, \ldots, m_n, a, k_1, \ldots, k_l) \simeq
  \varphi^V_d(\sz(\sm, \sk), a).
\end{equation}
It follows from (\csmn) that
there exists a $V$-function $\ss$ such that, for all natural numbers $k_1, \ldots, k_l$ and $c \in \II_{n+l+1}$, we have
$\ss(c, k_1, \ldots, k_l) \in \II_{n+1}$ and
\begin{equation}\label{fta9f1}
  \varphi^V_{\ss(c, k_1, \ldots, k_l)}(m_1, \ldots, m_n, a)
  \simeq
  \varphi^V_{c}(m_1, \ldots, m_n, a, k_1, \ldots, k_l)
\end{equation}
for all natural numbers $m_1, \ldots, m_n, a$.
Using \eqref{fta9f00} and \eqref{fta9f1}, we get
\begin{equation}\label{fta9f}
  \varphi^V_{\ss(\kk(d), k_1, \ldots, k_l)}(m_1, \ldots, m_n, a) \simeq
  \varphi^V_d(\sz(\sm, \sk), a).
\end{equation}
%for all natural numbers $m_1, \ldots, m_n, a, k_1, \ldots, k_l$ and $d \in \II_{n+1}$.
It follows from (\cek), (\cfp), (\cpp), (\cb)
that there is a natural number $e$ such that, for all natural numbers $k_1, \ldots, k_l, d$, we have
\begin{equation}\label{fta9n}
  \varphi^V_e(k_1, \ldots, k_l, d) \simeq \ss(\kk(d), k_1, \ldots, k_l)
\end{equation}
Let $\varnothing \not= M \subseteq \NN$ and $f$ be an $M$-evaluation.
Suppose for some natural numbers $d$ and $k_1, \ldots, k_l \in M$,
\begin{equation}\label{fta9r}
  d \rvf \forall \sx\,(A(\sx, \sk) \to B(\sx, \sk)),
\end{equation}
where $\sk = k_1, \ldots, k_l$.
Let us prove that
\begin{equation}\label{fta9e}
  \varphi^V_e(k_1, \ldots, k_l, d) \rvf \forall \sx\,(A(\sz(\sx, \sk), \sk) \to B(\sz(\sx, \sk), \sk)).
\end{equation}
Suppose for some natural numbers $a$ and $m_1, \ldots, m_n \in M$,
\begin{equation}\label{fta9g}
  a \rvf A(\sz(\sm, \sk), \sk),
\end{equation}
where $\sm = m_1, \ldots, m_n$.
Using \eqref{fta9r} and \eqref{fta9g}, we obtain
\begin{equation}\label{fta9c}
  \varphi^V_d(\sz(\sm, \sk), a) \rvf B(\sz(\sm, \sk), \sk).
\end{equation}
From \eqref{fta9f}, \eqref{fta9c} it follows that
\begin{equation}\label{fta9s}
  \varphi^V_{\ss(\kk(d), k_1, \ldots, k_l)}(m_1, \ldots, m_n, a)
  \rvf B(\sz(\sm, \sk), \sk).
\end{equation}
Thus for all natural numbers $a$ and $m_1, \ldots, m_n \in M$
we have \eqref{fta9s} whenever \eqref{fta9g}.
Hence
\begin{equation}\label{fta9ss}
  \ss(\kk(d), k_1, \ldots, k_l) \rvf \forall \sx\,(A(\sz(\sx, \sk), \sk) \to B(\sz(\sx, \sk), \sk)).
\end{equation}
From \eqref{fta9n}, \eqref{fta9ss} it follows that \eqref{fta9e}.
Thus for all natural numbers $d$ and $k_1, \ldots, k_l \in M$
we have \eqref{fta9e} whenever \eqref{fta9r}. Hence $e \rvfx{\sr} S$.

  \item[A10)] Let $S$ be
$\forall \sx\,(A \to B) \Rightarrow \forall \sy\,(A \to B),$
where $\sx = x_1, \ldots, x_n$, $\sy = y_1, \ldots, y_p$ and
no variable in $\sy$ is free in $\forall \sx\,(A \to B)$.
Denote by $\su(\sr)$ a list of distinct variables that consists all free variables of $\forall \sx\,(A \to B)$.
For all natural numbers $\sk = k_1, \ldots, k_l$
denote by $\su(\sk)$ the result of replacing $\sr$ by $\sk$ in $\su(\sr)$.
Any variable in $\sx$ is in $\sy$ or in $\sr$.
We will write $\sx(\sy, \sr)$ instead of $\sx$.
For all natural numbers $\sk = k_1, \ldots, k_l$ denote by $\sx(\sy, \sk)$
the result of substituting $\sk$ for $\sr$ in $\sx(\sy, \sr)$.
If $\sm = m_1, \ldots, m_n$ is a list of natural numbers,
then denote by $\sx(\sm, \sk)$
the result of replacing $\sy$ by $\sm$ in $\sx(\sx, \sk)$.
Obviously, $\sx(\sm, \sk)$ is a list of natural numbers and $|\sx(\sm, \sk)| = n$.
For all $i = 1, \ldots, n$ denote by $\xx_i$ a function such that
$\xx_i(\sm, \sk)$ is the $i$-th element of $\sx(\sm, \sk)$
for all natural numbers $\sm,\ \sk$.
Clearly, any $\xx_i$ is $I^j_{n+l}$ for some $j$.
Thus $S$ has the form
$$ \forall \sx\,(A(\sx, \su(\sr)) \to B(\sx, \su(\sr))) \Rightarrow
\forall \sy\,(A(\sx(\sy, \sr), \su(\sr)) \to B(\sx(\sy, \sr), \su(\sr))).$$
It follows from (\cek), (\cb), (\csmn) that there exists
a $V$-function $\kk$ such that, for all $d \in \II_{n+1}$, we have $\kk(d) \in \II_{p+l+1}$ and
for all natural numbers $m_1, \ldots, m_p, a, k_1, \ldots, k_l$,
\begin{equation}\label{fta10f0}
  \varphi^V_{\kk(d)}(m_1, \ldots, m_p, a, k_1, \ldots, k_l) \simeq
  \varphi^V_d(\xx_1(\sm, \sk), \ldots, \xx_n(\sm, \sk), a),
\end{equation}
where $\sm = m_1, \ldots, m_p$,\ $\sk = k_1, \ldots, k_l$.
By (\csmn),  there is
a $V$-function $\ss$ such that, for all natural numbers $k_1, \ldots, k_l$ and $c \in \II_{p+l+1}$, we have $\ss(c, k_1, \ldots, k_l) \in \II_{p+1}$
and
\begin{equation}\label{fta10f1}
  \varphi^V_{\ss(c, k_1, \ldots, k_l)}(m_1, \ldots, m_p, a)
  \simeq
  \varphi^V_{c}(m_1, \ldots, m_p, a, k_1, \ldots, k_l)
\end{equation}
for all natural numbers $m_1, \ldots, m_p, a$.
Since the list $\xx_1(\sm, \sk), \ldots, \xx_n(\sm, \sk)$ is $\sx(\sm, \sk)$,
it follows from \eqref{fta10f0}, \eqref{fta10f1} that
for all natural numbers $m_1, \ldots, m_p, a, k_1, \ldots, k_l$ and $d \in \II_{n+1}$,
\begin{equation}\label{fta10f}
  \varphi^V_{\ss(\kk(d), k_1, \ldots, k_l)}(m_1, \ldots, m_p, a) \simeq
  \varphi^V_d(\sx(\sm, \sk), a).
\end{equation}
By (\cek), (\cfp), (\cb), (\cpp),
there is a natural number $e$ such that, for all natural numbers $k_1, \ldots, k_l, d$, we have
\begin{equation}\label{fta10n}
  \varphi^V_e(k_1, \ldots, k_l, d) \simeq \ss(\kk(d), k_1, \ldots, k_l).
\end{equation}
Let $\varnothing \not= M \subseteq \NN$ and $f$ be an $M$-evaluation.
Suppose for some natural numbers $d$ and $k_1, \ldots, k_l \in M$,
\begin{equation}\label{fta10r}
  d \rvf \forall \sx\,(A(\sx, \su(\sk)) \to B(\sx, \su(\sk))),
\end{equation}
where $\sk = k_1, \ldots, k_l$.
Let us prove that
\begin{equation}\label{fta10e}
  \varphi^V_e(k_1, \ldots, k_l, d) \rvf \forall \sy\,(A(\sx(\sy, \sk), \su(\sk)) \to B(\sx(\sy, \sk), \su(\sk))).
\end{equation}
Suppose for some natural numbers $a$ and $m_1, \ldots, m_p \in M$,
\begin{equation}\label{fta10g}
  a \rvf A(\sx(\sm, \sk), \su(\sk)),
\end{equation}
where $\sm = m_1, \ldots, m_p$.
Using \eqref{fta10r} and \eqref{fta10g}, we get
\begin{equation}\label{fta10c}
  \varphi^V_d(\sx(\sm, \sk), a) \rvf B(\sx(\sm, \sk), \sk)
\end{equation}
From \eqref{fta10f}, \eqref{fta10c} it follows that
\begin{equation}\label{fta10s}
  \varphi^V_{\ss(\kk(d), k_1, \ldots, k_l)}(m_1, \ldots, m_p, a)
  \rvf B(\sx(\sm, \sk), \sk)
\end{equation}
Thus for all natural numbers $a$ and $m_1, \ldots, m_p \in M$
we have \eqref{fta10s} whenever \eqref{fta10g}.
Hence
\begin{equation}\label{fta10ss}
  \ss(\kk(d), k_1, \ldots, k_l) \rvf \forall \sy\,(A(\sx(\sy, \sk), \su(\sk)) \to B(\sx(\sy, \sk), \su(\sk))).
\end{equation}
From \eqref{fta10n}, \eqref{fta10ss} it follows that \eqref{fta10e}.
Thus for all natural numbers $d$ and $k_1, \ldots, k_l \in M$
we have \eqref{fta10e} whenever \eqref{fta10r}. Hence $e \rvfx{\sr} S$.

  \item[A11)] Let $S$ be
$\forall \sx, x\,(B(\sx, x, \sr) \to A(\sx, \sr)) \Rightarrow
\forall \sx\,(\exists x\, B(\sx, x, \sr) \to A(\sx, \sr))$
and $|\sx| = n$.
It follows from (\cek), (\cb), (\cfp), (\cpp) that
there is a $V$-function $\kk$ such that, for every $d \in \II_{n+2}$, we have $\kk(d) \in \II_{n+1}$ and
\begin{equation}\label{fta11f}
  \varphi^V_{\kk(d)}(m_1, \ldots, m_n, b) \simeq \varphi^V_d(m_1, \ldots, m_n,\pp_1 b,\pp_2 b)
\end{equation}
for all natural numbers $m_1, \ldots, m_n, b$.
By (\cfp) and (\cpp),  there is a natural number $e$ such that, for all natural numbers $k_1, \ldots, k_l, d$,
\begin{equation}\label{fta11n}
  \varphi^V_e(k_1, \ldots, k_l, d) \simeq \kk(d).
\end{equation}
Let $\varnothing \not= M \subseteq \NN$ and $f$ be an $M$-evaluation.
Suppose for some natural numbers $d$ and $k_1, \ldots, k_l \in M$,
\begin{equation}\label{fta11r}
  d \rvf \forall \sx, x\,(B(\sx, x, \sk) \to A(\sx, \sk)),
\end{equation}
where $\sk = k_1, \ldots, k_l$.
Let us prove that
\begin{equation}\label{fta11e}
  \varphi^V_e(k_1, \ldots, k_l, d) \rvf \forall \sx\,(\exists x\, B(\sx, x, \sk) \to A(\sx, \sk)).
\end{equation}
Suppose for some natural numbers $b$ and $m_1, \ldots, m_n \in M$,
\begin{equation}\label{fta11g}
  b \rvf \exists x\, B(\sm, x, \sk),
\end{equation}
where $\sm = m_1, \ldots, m_n$.
From \eqref{fta11g} it follows that
\begin{equation}\label{fta11g1}
  \pp_2 b \rvf B(\sm, \pp_1 b, \sk).
\end{equation}
Using \eqref{fta11r} and \eqref{fta11g1}, we get
\begin{equation}\label{fta11c}
  \varphi^V_d(m_1, \ldots, m_n, \pp_1 b, \pp_2 b) \rvf A(\sm, \sk).
\end{equation}
From \eqref{fta11f}, \eqref{fta11c} it follows that
\begin{equation}\label{fta11s}
  \varphi^V_{\kk(d)}(m_1, \ldots, m_n, b)
  \rvf A(\sm, \sk).
\end{equation}
Thus for all natural numbers $b$ and $m_1, \ldots, m_n \in M$ we have \eqref{fta11s} whenever \eqref{fta11g}.
Hence
\begin{equation}\label{fta11ss}
  \kk(d) \rvf \forall \sx\,(\exists x\, B(\sx, x, \sk) \to A(\sx, \sk)).
\end{equation}
From \eqref{fta11n}, \eqref{fta11ss} it follows that \eqref{fta11e}.
Thus for all natural numbers $d$ and $k_1, \ldots, k_l \in M$
we have \eqref{fta11e} whenever \eqref{fta11r}. Hence $e \rvfx{\sr} S$.
\end{itemize}

Suppose $S$ is obtained by a rule of $\BQC$.
\begin{itemize}
  \item[R1)] Let $S$ be obtained by
$\frac{\displaystyle A\Rightarrow B\;\ B\Rightarrow C}{\displaystyle A\Rightarrow C}$
and $\su = u_1, \ldots, u_p$ be an admissible list of variables for
$A\Rightarrow B$, $B\Rightarrow C$, and $A\Rightarrow C$.
By the induction hypothesis, there exist natural numbers $a$, $b$ such that
\begin{equation}\label{ftr1i1}
  a \rvfx{\su} A\Rightarrow B,
\end{equation}
\begin{equation}\label{ftr1i2}
  b \rvfx{\su} B\Rightarrow C
\end{equation}
for every evaluation~$f$.
Using \eqref{ftr1i1} and \eqref{ftr1i2},  we get $a,\ b \in \II_{p+1}$.
It follows from (\cek), (\cb) that there is a natural number $c$ such that, for all natural numbers $k_1, \ldots, k_p, d$, we have
\begin{equation}\label{ftr1f}
  \varphi^V_c(k_1, \ldots, k_p, d) \simeq
  \varphi^V_b(k_1, \ldots, k_p, \varphi^V_a(k_1, \ldots, k_p, d)).
\end{equation}
Let $\varnothing \not= M \subseteq \NN$ and $f$ be an $M$-evaluation.
Suppose for some natural numbers $d$ and $k_1, \ldots, k_p \in M$,
\begin{equation}\label{ftr1g}
  d \rvf [\sk / \su]\,A,
\end{equation}
where $\sk = k_1, \ldots, k_p$.
From \eqref{ftr1i1}, \eqref{ftr1g} it follows that
\begin{equation}\label{ftr1c1}
  \varphi^V_a(k_1, \ldots, k_p, d) \rvf [\sk / \su]\,B.
\end{equation}
Using \eqref{ftr1i2} and \eqref{ftr1c1}, we get
\begin{equation}\label{ftr1c2}
  \varphi^V_b(k_1, \ldots, k_p, \varphi^V_a(k_1, \ldots, k_p, d)) \rvf [\sk / \su]\,C.
\end{equation}
From \eqref{ftr1f}, \eqref{ftr1c2} it follows that
\begin{equation}\label{ftr1c2n}
  \varphi^V_c(k_1, \ldots, k_p, d) \rvf [\sk / \su]\,C.
\end{equation}
Thus for all natural numbers $d$ and $k_1, \ldots, k_p \in M$ we have \eqref{ftr1c2n} whenever \eqref{ftr1g}.
Hence $c \rvfx{\su} A \Rightarrow C.$
Thus $c \rvfx{\su} S$ for all evaluations $f$.
It follows from Proposition \ref{p_eq} that there is a natural number $e$ such that
$e \rvfx{\sr} S$ for all evaluations $f$.

  \item[R2)] Let $S$ be obtained by
$\frac{\displaystyle A\Rightarrow B\;\ A\Rightarrow C}{\displaystyle A\Rightarrow B\land  C}$
and $\su = u_1, \ldots, u_p$ be an admissible list of variables for $A\Rightarrow B$, $A\Rightarrow C$, and $A \Rightarrow B \land C$.
By the induction hypothesis,
there exist natural numbers $b$, $c$ such that, for every evaluation~$f$,
\begin{equation}\label{ftr2i1}
  b \rvfx{\su} A\Rightarrow B,
\end{equation}
\begin{equation}\label{ftr2i2}
  c \rvfx{\su} A\Rightarrow C.
\end{equation}
It follows from \eqref{ftr2i1}, \eqref{ftr2i2} that  $b,\ c \in \II_{p+1}$.
By (\cek), there is a natural number $a$
such that, for all natural numbers $k_1, \ldots, k_p, d$,
\begin{equation}\label{ftr2f}
  \varphi^V_a(k_1, \ldots, k_p, d) \simeq
  \cc(\varphi^V_b(k_1, \ldots, k_p, d),\, \varphi^V_c(k_1, \ldots, k_p, d)).
\end{equation}
Let $\varnothing \not= M \subseteq \NN$ and $f$ be an $M$-evaluation.
Suppose for some natural numbers
$d$ and $k_1, \ldots, k_p \in M$,
\begin{equation}\label{ftr2g}
  d \rvf [\sk / \su]\,A,
\end{equation}
where $\sk = k_1, \ldots, k_p$.
From \eqref{ftr2i1}, \eqref{ftr2g} it follows that
\begin{equation}\label{ftr2c1}
  \varphi^V_b(k_1, \ldots, k_p, d) \rvf [\sk / \su]\,B.
\end{equation}
Using \eqref{ftr2i2} and \eqref{ftr2g}, we get
\begin{equation}\label{ftr2c2}
  \varphi^V_c(k_1, \ldots, k_p, d) \rvf [\sk / \su]\,C.
\end{equation}
From \eqref{ftr2c1}, \eqref{ftr2c2} it follows that
\begin{equation}\label{ftr2u}
  \cc(\varphi^V_b(k_1, \ldots, k_p, d),\, \varphi^V_c(k_1, \ldots, k_p, d)) \rvf [\sk / \su]\,(B \land C).
\end{equation}
Using \eqref{ftr2f} and \eqref{ftr2u}, we obtain
\begin{equation}\label{ftr2s}
  \varphi^V_a(k_1, \ldots, k_p, d) \rvf [\sk / \su]\,(B \land C).
\end{equation}
Thus for all natural numbers $d$ and $k_1, \ldots, k_p \in M$
we have \eqref{ftr2s} whenever \eqref{ftr2g}.
Hence
$
  a \rvfx{\su} A \Rightarrow B \land C.
$
Thus $a \rvfx{\su} S$ for all evaluations $f$.
It follows from Proposition~\ref{p_eq} that there is a natural number $e$ such that
$e \rvfx{\sr} S$ for all evaluations $f$.

  \item[R3) a)] Let $S$ be obtained by
$\frac{\displaystyle A\Rightarrow B \land  C}{\displaystyle A\Rightarrow B}$
and
$\su = u_1, \ldots, u_p$ be an admissible list of variables for $A\Rightarrow B$ and $A \Rightarrow B \land C$.
By the induction hypothesis,
there is a natural number $a$ such that, for every evaluation~$f$, we have
\begin{equation}\label{ftr3i}
  a \rvfx{\su} (A\Rightarrow B \land C).
\end{equation}
It follows from \eqref{ftr3i} that  $a \in \II_{p+1}$.
By (\cek), there is a natural number $b$
such that, for all natural numbers $k_1, \ldots, k_p, d$,
\begin{equation}\label{ftr3f1}
  \varphi^V_b(k_1, \ldots, k_p, d) \simeq
  \pp_1 \varphi^V_a(k_1, \ldots, k_p, d).
\end{equation}
Let $\varnothing \not= M \subseteq \NN$ and $f$ be an $M$-evaluation.
Suppose for some natural numbers
$d$ and $k_1, \ldots, k_p \in M$,
\begin{equation}\label{ftr3g}
  d \rvf [\sk / \su]\,A,
\end{equation}
where $\sk = k_1, \ldots, k_p$.
From \eqref{ftr3i}, \eqref{ftr3g} it follows that
\begin{equation}\label{ftr3c}
  \varphi^V_a(k_1, \ldots, k_p, d) \rvf [\sk / \su]\,(B \land C).
\end{equation}
Using \eqref{ftr3c}, we get
\begin{equation}\label{ftr3cp1}
  \pp_1 \varphi^V_a(k_1, \ldots, k_p, d) \rvf [\sk / \su]\,B.
\end{equation}
From \eqref{ftr3f1}, \eqref{ftr3cp1} it follows that
\begin{equation}\label{ftr3s1}
  \varphi^V_b(k_1, \ldots, k_p, d) \rvf [\sk / \su]\,B.
\end{equation}
Thus for all natural numbers $d$ and $k_1, \ldots, k_p \in M$ we have \eqref{ftr3s1} whenever \eqref{ftr3g}.
Hence
$
  b \rvfx{\su} A \Rightarrow B.
$
Thus $b \rvfx{\su} S$ for all evaluations $f$.
It follows from Proposition~\ref{p_eq} that there is a natural number $e$ such that
$e \rvfx{\sr} S$ for all evaluations $f$.

  \item[    b)] Let $S$ be obtained by
$\frac{\displaystyle A\Rightarrow B \land  C}{\displaystyle A\Rightarrow C}$
and $\su = u_1, \ldots, u_p$ be an admissible list of variables for $A\Rightarrow C$ and $A \Rightarrow B \land C$.
By the induction hypothesis,
there is a natural number $a$ such that, for every evaluation~$f$, we have \eqref{ftr3i}.
Obviously, $a \in \II_{p+1}$.
It follows from (\cek) that there is a natural number $b$
such that, for all natural numbers $k_1, \ldots, k_p, d$, we have
\begin{equation}\label{ftr3f2}
  \varphi^V_b(k_1, \ldots, k_p, d) \simeq
  \pp_2 \varphi^V_a(k_1, \ldots, k_p, d).
\end{equation}
%Let $\varnothing \not= M \subseteq \NN$ and $f$ be an $M$-evaluation.
%Suppose for some natural numbers
%$d$ and $k_1, \ldots, k_p \in M$ имеет место \eqref{ftr3g},
%where $\sk = k_1, \ldots, k_p$.
%From \eqref{ftr3i}, \eqref{ftr3g} следует \eqref{ftr3c}.
%Using \eqref{ftr3c}, we get
%\begin{equation}\label{ftr3cp2}
%  \pp_2 \varphi^V_a(k_1, \ldots, k_p, d) \rvf [\sk / \su]\,B.
%\end{equation}
%From \eqref{ftr3f2}, \eqref{ftr3cp2} it follows that
%\begin{equation}\label{ftr3s2}
%  \varphi^V_b(k_1, \ldots, k_p, d) \rvf [\sk / \su]\,C.
%\end{equation}
%Thus for all natural numbers $d$ and $k_1, \ldots, k_p \in M$ whenever \eqref{ftr3g},
%we have \eqref{ftr3s2}.
%Hence
%\begin{equation}\label{ftr3e2}
%  b \rvfx{\su} A \Rightarrow C.
%\end{equation}
It can be easily checked that $b \rvfx{\su} S$ for all evaluations $f$.
It follows from Proposition~\ref{p_eq} that there is a natural number $e$ such that
$e \rvfx{\sr} S$ for all evaluations $f$.

  \item[R4)] Let $S$ be obtained by
$\frac{\displaystyle B\Rightarrow A\;\ C\Rightarrow A}{\displaystyle B\lor C\Rightarrow A}$
and $\su = u_1, \ldots, u_p$ be an admissible list of variables for $B\Rightarrow A,\ C \Rightarrow A$, and $B \lor C \Rightarrow A$.
By the induction hypothesis,
there exist natural numbers $b$, $c$ such that, for every evaluation~$f$,
\begin{equation}\label{ftr4i1}
  b \rvfx{\su} B\Rightarrow A,
\end{equation}
\begin{equation}\label{ftr4i2}
  c \rvfx{\su} C\Rightarrow A.
\end{equation}
It follows from \eqref{ftr4i1}, \eqref{ftr4i2} that  $b,\ c \in \II_{p+1}$.
By (\ceu), (\cek), (\cb), there exists a natural number $a$ such that, for all natural numbers $k_1, \ldots, k_p, d$,
we have
\begin{align}\label{ftr4f_if}
\varphi^V_a(k_1, \ldots, k_p, d) \simeq
    \varphi^V_b(k_1, \ldots, k_p, \mathsf{p}_2 d)\ \mbox{ if } \pp_1 d = 0, \\
\label{ftr4f_notif}
\varphi^V_a(k_1, \ldots, k_p, d) \simeq
    \varphi^V_c(k_1, \ldots, k_p, \mathsf{p}_2 d)\ \mbox{ if } \pp_1 d \not= 0.
\end{align}
%\begin{equation}\label{ftr4f}
%\varphi^V_a(k_1, \ldots, k_p, d) \simeq
%\begin{cases}
%\varphi^V_b(k_1, \ldots, k_p, \mathsf{p}_2 d), & \text{ if $\mathsf{p}_1 d = 0$;} \\
%\varphi^V_c(k_1, \ldots, k_p, \mathsf{p}_2 d), & \text{ otherwise.}
%\end{cases}
%\end{equation}
Let $\varnothing \not= M \subseteq \NN$ and $f$ be an $M$-evaluation.
Suppose for some natural numbers
$d$ and $k_1, \ldots, k_p \in M$,
\begin{equation}\label{ftr4g}
  d \rvf [\sk / \su]\,(B \lor C),
\end{equation}
where $\sk = k_1, \ldots, k_p$.
From \eqref{ftr4g} it follows that either $\pp_1 d = 0$, or $\pp_1 d = 1$.
Let us consider $2$ cases.

Case $1$: $\pp_1 d = 0$.
Using \eqref{ftr4g}, we get
\begin{equation}\label{ftr4g1}
  \pp_2 d \rvf [\sk / \su]\,B.
\end{equation}
From \eqref{ftr4i1}, \eqref{ftr4g1} it follows that
\begin{equation}\label{ftr4c1}
  \varphi^V_{b}(k_1, \ldots, k_p, \pp_2 d) \rvf [\sk / \su]\,A.
\end{equation}
Using \eqref{ftr4f_if} and \eqref{ftr4c1}, we obtain
\begin{equation}\label{ftr4f1}
  \varphi^V_{a}(k_1, \ldots, k_p, d) \rvf [\sk / \su]\,A.
\end{equation}

Case $2$: $\pp_1 d = 1$.
Using \eqref{ftr4g}, we get
\begin{equation}\label{ftr4g2}
  \pp_2 d \rvf [\sk / \su]\,C.
\end{equation}
From \eqref{ftr4i2}, \eqref{ftr4g2} it follows that
\begin{equation}\label{ftr4c2}
  \varphi^V_{c}(k_1, \ldots, k_p, \pp_2 d) \rvf [\sk / \su]\,A.
\end{equation}
Using \eqref{ftr4f_notif} and \eqref{ftr4c2}, we get \eqref{ftr4f1}.

Thus for all natural numbers $d$ and $k_1, \ldots, k_p \in M$ we have \eqref{ftr4f1} whenever \eqref{ftr4g}.
Hence
$
  a \rvfx{\su} B \lor C \Rightarrow A.
$
Thus $a \rvfx{\su} S$ for all evaluations $f$.
It follows from Proposition~\ref{p_eq} that there is a natural number $e$ such that
$e \rvfx{\sr} S$ for all evaluations $f$.

  \item[R5) a)] Let $S$ be obtained by
$\frac{\displaystyle B\lor C\Rightarrow A}{\displaystyle B\Rightarrow A}$
and $\su = u_1, \ldots, u_p$ be an admissible list of variables for $B\Rightarrow A$ and $B \lor C \Rightarrow A$.
By the induction hypothesis,
there is a natural number $a$ such that, for every evaluation~$f$, we have
\begin{equation}\label{ftr5i}
  a \rvfx{\su} B \lor C \Rightarrow A.
\end{equation}
It follows from \eqref{ftr5i} that  $a \in \II_{p+1}$.
By (\cek), (\cb), (\cec), there is a natural number $b$
such that, for all natural numbers $k_1, \ldots, k_p, d$,
\begin{equation}\label{ftr5f1}
  \varphi^V_b(k_1, \ldots, k_p, d) \simeq
  \varphi^V_a(k_1, \ldots, k_p, \cc(0, d)).
\end{equation}
Let $\varnothing \not= M \subseteq \NN$ and $f$ be an $M$-evaluation.
Suppose for some natural numbers
$d$ and $k_1, \ldots, k_p \in M$,
\begin{equation}\label{ftr5g1}
  d \rvf [\sk / \su]\,B,
\end{equation}
where $\sk = k_1, \ldots, k_p$.
From \eqref{ftr5g1} it follows that
\begin{equation}\label{ftr5g10}
  \cc(0, d) \rvf [\sk / \su]\,(B \lor C).
\end{equation}
Using \eqref{ftr5i} and \eqref{ftr5g10}, we get
\begin{equation}\label{ftr5c1}
  \varphi^V_a(k_1, \ldots, k_p, \cc(0, d)) \rvf [\sk / \su]\,A.
\end{equation}
From \eqref{ftr5f1}, \eqref{ftr5c1} it follows that
\begin{equation}\label{ftr5s}
  \varphi^V_b(k_1, \ldots, k_p, d) \rvf [\sk / \su]\,A.
\end{equation}
Thus for all natural numbers $d$ and $k_1, \ldots, k_p \in M$ we have \eqref{ftr5s} whenever \eqref{ftr5g1}.
Hence
$
  b \rvfx{\su} B \Rightarrow A.
$
Thus $b \rvfx{\su} S$ for all evaluations $f$.
It follows from Proposition~\ref{p_eq} that there is a natural number $e$ such that
$e \rvfx{\sr} S$ for all evaluations $f$.

  \item[    b)] Let $S$ be obtained by
$\frac{\displaystyle B\lor C\Rightarrow A}{\displaystyle C\Rightarrow A}$
and $\su = u_1, \ldots, u_p$ be an admissible list of variables for $C\Rightarrow A$ and $B \lor C \Rightarrow A$.
By the induction hypothesis,
there is a natural number $a$ such that, for every evaluation~$f$, we have \eqref{ftr5i}.
It follows from \eqref{ftr5i} that  $a \in \II_{p+1}$.
By (\cek), (\cb), (\cec), there is a natural number $b$ such that, for all natural numbers $k_1, \ldots, k_p, d$,
\begin{equation}\label{ftr5f2}
  \varphi^V_b(k_1, \ldots, k_p, d) \simeq
  \varphi^V_a(k_1, \ldots, k_p, \cc(1, d)).
\end{equation}
%Let $\varnothing \not= M \subseteq \NN$ and $f$ be an $M$-evaluation.
%Suppose for some natural numbers
%$d$ and $k_1, \ldots, k_p \in M$,
%\begin{equation}\label{ftr5g2}
%  d \rvf [\sk / \su]\,C,
%\end{equation}
%where $\sk = k_1, \ldots, k_p$.
%From \eqref{ftr5g1} it follows that
%\begin{equation}\label{ftr5g20}
%  \cc(1, d) \rvf [\sk / \su]\,(B \lor C).
%\end{equation}
%From \eqref{ftr5i}, \eqref{ftr5g20} it follows that
%\begin{equation}\label{ftr5c2}
%  \varphi^V_a(k_1, \ldots, k_p, \cc(1, d)) \rvf [\sk / \su]\,A.
%\end{equation}
%Using \eqref{ftr5f2}, \eqref{ftr5c2}, we get \eqref{ftr5s}.
%Thus for all natural numbers $d$ and $k_1, \ldots, k_p \in M$ whenever \eqref{ftr5g2},
%we have \eqref{ftr5s}.
%Hence
%\begin{equation}\label{ftr5e2}
%  b \rvfx{\su} C \Rightarrow A.
%\end{equation}
It can be easily checked that
$b \rvfx{\su} S$ for all evaluations $f$.
It follows from Proposition~\ref{p_eq} that there is a natural number $e$ such that
$e \rvfx{\sr} S$ for all evaluations $f$.

  \item[R6)] Let $S$ be obtained by
$\frac{\displaystyle A \Rightarrow B}{\displaystyle [\sy / \sx]\,A \Rightarrow [\sy / \sx]\,B}\ $
and $|\sx| = |\sy| = n$.
Suppose $\su = u_1, \ldots, u_p$ is an admissible list of variables for $A \Rightarrow B$ and
$[\sy / \sx]\,A \Rightarrow [\sy / \sx]\,B$.
By $[\sy / \sx]\,\su$ denote the result of substituting $\sy$ for $\sx$ in $\su$.
By $\su$ is admissible for $[\sy / \sx]\,A \Rightarrow [\sy / \sx]\,B$,
all variables in $[\sy / \sx] \su$ are in $\su$.
If $\sk = k_1, \ldots, k_p$ is a list of natural numbers,
then by $[\sk / \su][\sy / \sx]\,\su$ denote the result of substituting $\sk$ for $\su$ in $[\sy / \sx]\,\su$.
Obviously, $[\sk / \su][\sy / \sx]\,\su$ is a list of natural numbers and $|[\sk / \su][\sy / \sx]\,\su| = p$.
For all $i = 1, \ldots, l$ by $\zz_i$ denote a function such that
$\zz_i(k_1, \ldots, k_p)$ is the $i$-th element of $[\sk / \su][\sy / \sx]\,\su$
for all natural numbers $k_1, \ldots, k_p$.
Clearly, for all $i$ there exists $j$ such that $\zz_i$ is $\II^j_{l+1}$.
By the induction hypothesis,
there is a natural number $a$ such that, for every evaluation~$f$, we have
\begin{equation}\label{ftr6i}
  a \rvfx{\su} A \Rightarrow B.
\end{equation}
From \eqref{ftr6i} it follows that $a \in \II_{p+1}$.
By (\cek), (\cb), there is a natural number $b$ such that, for all natural numbers $d$ and $\sk= k_1, \ldots, k_p$,
\begin{equation}\label{ftr6f0}
  \varphi^V_b(k_1, \ldots, k_p, d) \simeq
  \varphi^V_a(\zz_1(\sk), \ldots, \zz_l(\sk), d).
\end{equation}
By $[\sk / \su][\sy / \sx]\,\su$ is $\zz_1(\sk), \ldots, \zz_l(\sk)$,

\begin{equation}\label{ftr6f}
  \varphi^V_b(k_1, \ldots, k_p, d) \simeq
  \varphi^V_a([\sk / \su][\sy / \sx]\,\su, d).
\end{equation}
Let $\varnothing \not= M \subseteq \NN$ and $f$ be an $M$-evaluation.
Suppose for some natural numbers $d$ and $k_1, \ldots, k_p \in M$,
\begin{equation}\label{ftr6g}
  d \rvf [\sk / \su][\sy / \sx]\,A,
\end{equation}
where $\sk = k_1, \ldots, k_p$.
From \eqref{ftr6i}, \eqref{ftr6g} it follows that
\begin{equation}\label{ftr6c}
  \varphi^V_a([\sk / \su][\sy / \sx]\,\su, d) \rvf [\sk / \su][\sy / \sx]\,B.
\end{equation}
Using \eqref{ftr6f} and \eqref{ftr6c}, we get
\begin{equation}\label{ftr6s}
  \varphi^V_b(k_1, \ldots, k_p, d) \rvf [\sk / \su][\sy / \sx]\,B.
\end{equation}
Thus for all natural numbers $d$ and $k_1, \ldots, k_p \in M$ we have \eqref{ftr6s} whenever \eqref{ftr6g}.
Hence
$
  b \rvfx{\su} [\sy / \sx]\,A \Rightarrow [\sy / \sx]\,B.
$
Thus $b \rvfx{\su} S$ for all evaluations $f$.
It follows from Proposition~\ref{p_eq} that there is a natural number $e$ such that
$e \rvfx{\sr} S$ for all evaluations $f$.

  \item[R7)] Let $S$ be obtained by
$\frac{\displaystyle B\Rightarrow A}{\displaystyle \exists x\, B\Rightarrow A},$
where $x$ is not free in $A$.
It is clear that $S$ has the form $\exists x\, B(\su, x) \Rightarrow A(\su)$
for some list of variables $\su = u_1, \ldots, u_p$.
By the induction hypothesis, there is a natural number $a$ such that, for every evaluation~$f$, we have
\begin{equation}\label{ftr7i}
  a \rvfx{\su, x} B \Rightarrow A.
\end{equation}
It follows from \eqref{ftr7i} that  $a \in \II_{l+2}$.
By (\cek), (\cb), there is a natural number $b$ such that, for all natural numbers $k_1, \ldots, k_p, d$,
\begin{equation}\label{ftr7f}
  \varphi^V_b(k_1, \ldots, k_p, d) \simeq
  \varphi^V_a(k_1, \ldots, k_p, \pp_1 d, \pp_2 d).
\end{equation}
Let $\varnothing \not= M \subseteq \NN$ and $f$ be an $M$-evaluation.
Suppose for some natural numbers
$d$ and $k_1, \ldots, k_p \in M$,
\begin{equation}\label{ftr7g}
  d \rvf \exists x\, B(\sk, x),
\end{equation}
where $\sk = k_1, \ldots, k_p$.
From \eqref{ftr7g} it follows that
\begin{equation}\label{ftr7g0}
  \pp_2 d \rvf B(\sk, \pp_1 d).
\end{equation}
Using \eqref{ftr7i} and \eqref{ftr7g0}, we get
\begin{equation}\label{ftr7c}
  \varphi^V_a(k_1, \ldots, k_p, \pp_1 d, \pp_2 d) \rvf A(\sk).
\end{equation}
From \eqref{ftr7f}, \eqref{ftr7c} it follows that
\begin{equation}\label{ftr7s}
  \varphi^V_b(k_1, \ldots, k_p, d) \rvf A(\sk).
\end{equation}
Thus for all natural numbers $d$ and $k_1, \ldots, k_p \in M$ we have \eqref{ftr7s} whenever \eqref{ftr7g}.
Hence
$
  b \rvfx{\su} \exists x\,B \Rightarrow A.
$
Thus $b \rvfx{\su} S$ for all evaluations $f$.
It follows from Proposition~\ref{p_eq} that there is a natural number $e$ such that
$e \rvfx{\sr} S$ for all evaluations $f$.

  \item[R8)] Let $S$ be obtained by
$\frac{\displaystyle \exists x\,B\Rightarrow A}{\displaystyle B\Rightarrow A},$
where $x$ is not free in $A$.
It is clear that $S$ has the form $B(\su, x) \Rightarrow A(\su)$
for some list of variables $\su = u_1, \ldots, u_p$.
By the induction hypothesis, there is a natural number $a$ such that, for every evaluation~$f$, we have
\begin{equation}\label{ftr8i}
  a \rvfx{\su} \exists x\,B \Rightarrow A.
\end{equation}
It follows from \eqref{ftr8i} that $a \in \II_{p+1}$.
By (\cek), (\cb), there is a natural number $b$ such that, for all natural numbers $k_1, \ldots, k_p, c, d$,
\begin{equation}\label{ftr8f}
  \varphi^V_b(k_1, \ldots, k_p, c, d) \simeq
  \varphi^V_a(k_1, \ldots, k_p, \cc(c, d)).
\end{equation}
Let $\varnothing \not= M \subseteq \NN$ and $f$ be an $M$-evaluation.
Suppose for some natural numbers
$d$ and $k_1, \ldots, k_p, c \in M$,
\begin{equation}\label{ftr8g}
  d \rvf B(\sk, c),
\end{equation}
where $\sk = k_1, \ldots, k_p$.
From \eqref{ftr8g} it follows that
\begin{equation}\label{ftr8g0}
  \cc(c, d) \rvf \exists x\, B(\sk, x).
\end{equation}
Using \eqref{ftr8i} and \eqref{ftr8g0}, we get
\begin{equation}\label{ftr8c}
  \varphi^V_a(k_1, \ldots, k_p, \cc(c, d)) \rvf A(\sk).
\end{equation}
From \eqref{ftr8f}, \eqref{ftr8c} it follows that
\begin{equation}\label{ftr8s}
  \varphi^V_b(k_1, \ldots, k_p, c, d) \rvf A(\sk).
\end{equation}
Thus for all natural numbers $d$ and $k_1, \ldots, k_p, c \in M$ we have \eqref{ftr8s} whenever \eqref{ftr8g}.
Hence
$
  b \rvfx{\su, x} B \Rightarrow A.
$
Thus $b \rvfx{\su, x} S$ for all evaluations $f$.
It follows from Proposition~\ref{p_eq} that there is a natural number $e$ such that
$e \rvfx{\sr} S$ for all evaluations $f$.

  \item[R9)] Let $S$ be obtained by
$\frac{\displaystyle A \land  B\Rightarrow C}{\displaystyle A \Rightarrow \forall \sx\,(B\to C)},$
where $|\sx| = n$ and all variables in $\sx$ are not free in $A$.
It is clear that
$S$ has the form
$$A(\su) \Rightarrow \forall \sx\,(B(\sx, \su) \to C(\sx, \su))$$
for some list of variables $\su = u_1, \ldots, u_p$.
By the induction hypothesis, there is a natural number $c$ such that, for every evaluation~$f$,
\begin{equation}\label{ftr9i}
  c \rvfx{\sx, \su} A(\su) \land  B(\sx, \su) \Rightarrow C(\sx, \su).
\end{equation}
It follows from \eqref{ftr9i} that  $c \in \II_{n+l+1}$.
By (\cek), (\cb), (\csmn), (\cpp),
there exists a $V$-function $\ss$ such that we have
\begin{equation}\label{ftr9f}
  \varphi^V_{\ss(c, k_1, \ldots, k_p, d)}(m_1, \ldots, m_n, b) \simeq
\varphi^V_c(m_1, \ldots, m_n, k_1, \ldots, k_p, \cc(d, b))
\end{equation}
for all natural numbers $m_1, \ldots, m_n, k_1, \ldots, k_p, d, b$.
It follows from (\cpp), (\csmn) that there is a natural number $e$ such that, for all natural numbers $k_1, \ldots, k_p, d$, we have
\begin{equation}\label{ftr9n}
  \varphi^V_e(k_1, \ldots, k_p, d) \simeq \ss(c, k_1, \ldots, k_p, d).
\end{equation}
Let $\varnothing \not= M \subseteq \NN$ and $f$ be an $M$-evaluation.
Suppose for some natural numbers
$d$ and $k_1, \ldots, k_p \in M$,
\begin{equation}\label{ftr9r}
  d \rvf A(\sk),
\end{equation}
where $\sk = k_1, \ldots, k_p$.
Let us prove that
\begin{equation}\label{ftr9e}
  \varphi^V_e(k_1, \ldots, k_p, d) \rvf \forall \sx\,(B(\sx, \sk) \to C(\sx, \sk)).
\end{equation}
Suppose for some natural numbers $b$ and $m_1, \ldots, m_n \in M$,
\begin{equation}\label{ftr9g}
  b \rvf B(\sm, \sk),
\end{equation}
where $\sm = m_1, \ldots, m_n$.
From \eqref{ftr9r}, \eqref{ftr9g} it follows that
\begin{equation}\label{ftr9u}
  \cc(d, b) \rvf A(\sk) \land B(\sm, \sk).
\end{equation}
Using \eqref{ftr9i} and \eqref{ftr9u}, we get
\begin{equation}\label{ftr9c}
  \varphi^V_c(m_1, \ldots, m_n, k_1, \ldots, k_p, \cc(d, b)) \rvf C(\sm, \sk).
\end{equation}
From \eqref{ftr9f}, \eqref{ftr9c} it follows that
\begin{equation}\label{ftr9s}
  \varphi^V_{\ss(c, k_1, \ldots, k_p, d)}(m_1, \ldots, m_n, b)
  \rvf C(\sm, \sk).
\end{equation}
Thus for all natural numbers $b$ and $m_1, \ldots, m_n \in M$ we have \eqref{ftr9s} whenever \eqref{ftr9g}.
Hence
\begin{equation}\label{ftr9ss}
  \ss(c, k_1, \ldots, k_p, d) \rvf \forall \sx\,(B(\sx, \sk) \to C(\sx, \sk)).
\end{equation}
From \eqref{ftr9n}, \eqref{ftr9ss} it follows that \eqref{ftr9e}.
Thus for all natural numbers $d$ and $k_1, \ldots, k_p \in M$ we have \eqref{ftr9e} whenever \eqref{ftr9r}.
Hence
$$
  e \rvfx{\su}\,  A \Rightarrow \forall \sx\,(B\to C).
$$
Thus $e \rvfx{\su} S$ for all evaluations $f$.
It follows from Proposition~\ref{p_eq} that there exists a natural number $e'$ such that
$e' \rvfx{\sr} S$ for all evaluations $f$.
\end{itemize}
  %\phantom{abc}
\end{proof}

%From Theorem \ref{t_main} it follows the next statement.
\begin{theorem}
  If a sentence $A$ is derivable in $\BQC$,
  then the sentence $A$ is absolutely $V$-realizable over all domains.
\end{theorem}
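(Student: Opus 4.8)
The plan is to obtain this as an immediate corollary of Theorem~\ref{t_main}. Assume $A$ is a sentence with $\BQC \vdash \top \Rightarrow A$. Neither $\top$ nor $A$ has any free variables, so the empty list of variables is admissible for the sequent $\top \Rightarrow A$. Applying Theorem~\ref{t_main} to $S = (\top \Rightarrow A)$ with $\sr$ the empty list (so $l = 0$), I obtain a natural number $e$ such that $e \rvf \forall\,(\top \to A)$ for every evaluation $f$; recall that here $e \rvf \forall\,(\top \to A)$ is exactly the relation $e \rvfx{\sr} S$ for $\sr$ empty, by the definition of $\rvfx{\sr}$.

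It then remains to unwind this statement. Fix any $M \subseteq \NN$ and any $M$-evaluation $f$. By the $n = 0$ instance of the clause defining $V$-realizability of a universal implication, $e \rvf \forall\,(\top \to A)$ means that $e \in \II_1$ and, for every $s \in \NN$, if $s \rvf \top$ then $\varphi^V_e(s)$ is defined and $\varphi^V_e(s) \rvf A$. Taking $s = 0$ and using that $0 \rvf \top$ (indeed every number realizes $\top$), we conclude that $\varphi^V_e(0)$ is defined and $\varphi^V_e(0) \rvf A$.

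Finally, since $e \in \II_1$, the symbol $\varphi^V_e$ denotes one fixed unary $V$-function, so the number $\varphi^V_e(0)$ does not depend on $M$ or on $f$; set $e' = \varphi^V_e(0)$. The previous paragraph then gives $e' \rvf A$ for every $M \subseteq \NN$ and every $M$-evaluation $f$, which is precisely the definition of $A$ being absolutely $V$-realizable over all domains. I do not expect any genuine obstacle beyond this last uniformity observation: the definedness of $\varphi^V_e(0)$ is a property of $e$ alone (so it suffices that some evaluation exists, which is clear), and its value serves as the single common realizer required by the definition.
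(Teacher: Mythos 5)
Your proposal is correct and follows exactly the paper's own argument: apply Theorem~\ref{t_main} to $\top \Rightarrow A$ with the empty admissible list of variables, then unwind the realizability clause for $\forall\,(\top\to A)$ using the fact that $0 \rvf \top$ to obtain the uniform realizer $e' = \varphi^V_e(0)$. The only difference is that you spell out the uniformity of $e'$ across domains and evaluations, which the paper leaves implicit.
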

\begin{proof}
  Let $A$ be derivable in $\BQC$. Then $\BQC \vdash \top \Rightarrow A$.
  Since $A$ is a sentence, we see that an empty list of variables $\overline{v}$ is admissible for $\top \Rightarrow A$.
  From Theorem~\ref{t_main} it follows that
  there exists a natural number $e$ such that $e \rvfx{\overline{v}} \top \Rightarrow A$ for all evaluations $f$.
  Then $e' \rvf A$ for all evaluations $f$, where $e' = \varphi^V_e(0)$.
\end{proof}

\section*{Acknowledgments}
This research was partially supported by Russian Foundation for Basic Research under grant 20-01-00670.

\end{document}